\documentclass{article}
\pdfoutput=1

\usepackage{amsmath,amsthm,amssymb,verbatim,amscd}
\usepackage{verbatim}
\usepackage{graphicx}
\usepackage{color}
\usepackage{float}
\usepackage[tmargin=1.5in,bmargin=1.5in,rmargin=1.5in,lmargin=1.5in]{geometry}

\theoremstyle{plain}

\newtheorem{theorem}{\textrm{\textbf{Theorem}}}[section]
\newtheorem{corollary}[theorem]{\textrm{\textbf{Corollary}}}
\newtheorem{proposition}[theorem]{\textrm{\textbf{Proposition}}}
\newtheorem{lemma}[theorem]{\textrm{\textbf{Lemma}}}

\theoremstyle{definition}

\newtheorem{definition}[theorem]{\textrm{\textbf{Definition}}}

\theoremstyle{remark}

\newtheorem*{acknowledgement}{Acknowledgment}

\numberwithin{equation}{section}

\setlength{\topmargin}{-0.5in}
\setlength{\textheight}{8in}

\newcommand{\Keywords}[1]{\par\noindent{\small{\em Keywords\/}: #1}}
\newcommand{\AMSclass}[1]{\par\noindent{\small{\em AMS subject classification\/}: #1}}

\def\mc#1{\multicolumn{1}{|c}{#1}}

\title{Retaining positive definiteness in thresholded matrices}
\author{Dominique Guillot\\ Stanford University \and Bala Rajaratnam\\ Stanford University}

\bibliographystyle{plain}
\begin{document}
\maketitle
\begin{abstract}
Positive definite (p.d.) matrices arise naturally in many areas within mathematics and also feature extensively in scientific applications. In modern high-dimensional applications, a common approach to finding sparse positive definite matrices is to threshold their small off-diagonal elements. This thresholding, sometimes referred to as \emph{hard-thresholding}, sets small elements to zero. Thresholding has the attractive property that the resulting matrices are sparse, and are thus easier to interpret and work with. In many applications, it is often required, and thus implicitly assumed, that thresholded matrices retain positive definiteness. In this paper we formally investigate the algebraic properties of p.d. matrices which are thresholded. We demonstrate that for positive definiteness to be preserved, the pattern of elements to be set to zero has to necessarily correspond to a graph which is a union of disconnected complete components. This result rigorously demonstrates that, except in special cases,  positive definiteness can be easily lost. We then proceed to demonstrate that the class of diagonally dominant matrices is not maximal in terms of retaining positive definiteness when thresholded. Consequently, we derive characterizations of matrices which retain positive definiteness when thresholded with respect to important classes of graphs. In particular, we demonstrate that retaining positive definiteness upon thresholding is governed by complex algebraic conditions. 
\end{abstract}
\ \\
\Keywords{Thresholding, positive definite matrices, graphs, diagonally dominant matrices, chordal graphs, trees}
\AMSclass{Primary: 15B48, Secondary: 05C05, 05C38}
%%%%%%%%%%%%%%%%%%%%%%%%%%%%%

\section{Introduction}
Positive definite matrices arise naturally in various settings. Concrete examples are found in the fields of probability, statistics and machine learning. Here they are often used to represent covariance (or correlation) matrices. These covariance matrices encode multivariate relationships in a random vector and feature prominently in procedures like principal component analysis (PCA), linear discriminant analysis (LDA), etc.. Unlike Euclidean space, the open cone of positive definite matrices is more difficult to analyze because of the complex relationships between the elements of the matrices. In modern high-dimensional applications, a common approach to finding sparse positive definite or sparse covariance matrices is to hard-threshold small off-diagonal elements of given positive definite matrices. For example, a common approach to finding statistically significant relationships between genes in a gene-gene interaction analysis is to threshold the corresponding correlation matrix of gene-gene associations (see \cite{Li_Horvath}, \cite{Zhang_Horvath}). Thresholding has the property of setting correlations which are small in absolute value to zero. This procedure therefore eliminates spurious correlations which could have risen purely due to random noise. In the process, only statistically significant correlations are retained. This thresholding approach has the attractive property that it is easily computed, i.e., it is a highly scalable procedure, and yields a smaller number of large correlation coefficients. The resulting thresholded matrices are sparse and are thus easier to interpret and manipulate. Furthermore, finding a regularized or sparse estimate of the correlation matrix is often undertaken with an ulterior goal in mind. Indeed, these thresholded matrices are used as regularized estimates of covariance matrices in various applications. In particular, they are ingredients in several procedures in statistics and machine learning, such as principal component analysis, linear discriminant analysis, canonical correlation analysis, etc. (see \cite{bickel_levina}, \cite{flexible_cov}). For these procedures to be widely applicable, it is often assumed implicitly that the thresholded matrices retain positive definiteness. The theoretical investigation of whether this is true is critical for the validity of such applications. Statistical properties of thresholded correlation matrices have been studied in the literature (see \cite{bickel_levina}), however, algebraic properties of such matrices have not been previously investigated. 

In this paper, we provide a rigorous answer to the aforementioned questions. In particular, we prove that, if positive definiteness is to be retained after thresholding, the pattern of elements to be thresholded has to necessarily correspond to a graph which is a union of disconnected complete components. We also consider the problem of thresholding positive definite matrices at a given level $\epsilon$, i.e., only to zero out elements which are less than $\epsilon$ in magnitude. In this case, we prove that only matrices which already have zeros according to a tree, will retain positive definiteness for all values of the thresholding parameter $\epsilon$. These results imply that positive definiteness can be easily lost due to hard-thresholding. Furthermore, it is well known that the class of diagonally dominant matrices with strictly positive diagonal entries is positive definite. Procedures such as hard-thresholding maintain diagonal dominance. Hence, we also investigate if positive definiteness is only retained by the class of diagonally dominant matrices, and proceed to demonstrate that this class is not maximal. We therefore proceed to identify algebraic relationships which characterize retaining positive definiteness for important classes of graphs. Our results formally demonstrate that thresholding approaches used in the literature can lead to sparse matrices which are no longer positive definite.

As a concrete example, let us threshold the $(2,3)$ and $(3,2)$ elements of the following symmetric matrix 
\[
A = \left(\begin{array}{ccc} 
4 & 3 & -3 \\
3 & 4 & -1 \\
-3 & -1 & 4
\end{array}\right) 
\quad 
A' = \left(\begin{array}{ccc} 
4 & 3 & -3 \\
3 & 4 & 0 \\
-3 & 0 & 4
\end{array}\right). 
\]
One can easily check that $A$ is positive definite, but $A'$ is not. So a fundamental question that arises is, when do we know for certain that $A' > 0$. If $A$ comes from certain special subclasses of the cone of p.d. matrices, the answer is immediate. For instance, if $A \in \mathcal{B}$ where $\mathcal{B} \subset \mathbb{P}^+$ is the class of diagonally dominant matrices, then $A'$ is always positive definite. As pointed out in the example above, this property is not true in general for an arbitrary positive definite matrix $A$. 

More formally, the elements to threshold are naturally encoded in a graph $G = (V,E)$ with $V = \{1,\dots,p\}$ and with the convention that $A_{ij}$ is set to zero if and only if $(i,j) \not\in E$. We denote the thresholded matrix by $A_G$. So when will $A_G > 0$ for certain, i.e., what classes of graphs will ensure positive definiteness. It is not immediately clear what $G$ should be. In this paper we give results of potentially great consequence, that only for a narrow class of graphs can we ensure positive definiteness. The statement of the main theorem in this paper is given below. 

\begin{theorem}
Let $A$ be an arbitrary symmetric matrix such that $A > 0$, i.e., $A \in \mathbb{P}^+$. Threshold $A$ according to a graph $G = (V,E)$ with the resulting thresholded matrix denoted by $A_G$. Then
\[
A_G > 0 \textrm{ for any } A \in \mathbb{P}^+ \Leftrightarrow G = \bigcup_{i=1}^\tau G_i \qquad \textrm{ for some } \tau \in \mathbb{N},  
\]
where $G_i$, $i=1, \dots, \tau$, denote disconnected and complete components of $G$. 
\end{theorem}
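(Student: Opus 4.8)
The plan is to establish the two implications separately, with the direction $(\Leftarrow)$ being essentially immediate and the direction $(\Rightarrow)$ reducing to a single explicit $3\times 3$ example. For $(\Leftarrow)$: suppose $G=\bigcup_{i=1}^{\tau}G_i$ with the $G_i$ complete and on disjoint vertex sets $V_1,\dots,V_{\tau}$ partitioning $V$. Then no entry $A_{uv}$ with $u,v$ in a common $V_i$ is thresholded (since $G_i$ is complete), while every entry $A_{uv}$ with $u,v$ in distinct parts is set to zero (no edge joins distinct components). Hence, after the permutation grouping the $V_i$, the matrix $A_G$ is block diagonal with diagonal blocks the principal submatrices $A[V_i]$; each $A[V_i]$ is positive definite because $A$ is, so $A_G>0$.

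For $(\Rightarrow)$ I would argue by contraposition: assuming $G$ is \emph{not} a union of disjoint complete components, I will produce a single $A\in\mathbb{P}^+$ with $A_G\not>0$. The first step is the standard graph-theoretic observation that a graph is a disjoint union of cliques if and only if it contains no induced path on three vertices: if $G$ is not such a union, some connected component is not complete and hence contains two non-adjacent vertices, and the first three vertices $a,b,c$ of a shortest path joining them satisfy $\{a,b\},\{b,c\}\in E$ and $\{a,c\}\notin E$, i.e.\ they induce a $P_3$ with centre $b$.

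The second step reduces everything to a $3\times 3$ computation. Fix such an induced $P_3$ on $\{a,b,c\}$ and a scalar $x$ with $1/\sqrt{2}\le |x|<1$ (for concreteness $x=4/5$). Define $A$ by $A_{vv}=1$ for all $v\in V$, by $A_{ab}=A_{ba}=A_{bc}=A_{cb}=x$ and $A_{ac}=A_{ca}=x^2$, and by $A_{uv}=0$ for every remaining off-diagonal pair, so that $A$ is the direct sum (up to permutation) of $I_{p-3}$ with $B':=\left(\begin{smallmatrix}1&x&x^2\\ x&1&x\\ x^2&x&1\end{smallmatrix}\right)$. A Sylvester's-criterion check on $B'$ gives successive leading minors $1,\ 1-x^2,\ (1-x^2)^2$, all positive, so $B'$ and hence $A$ are positive definite. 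Now threshold according to $G$: every zero entry of $A$ stays zero, the kept edges $\{a,b\},\{b,c\}$ are untouched, and only the non-edge $\{a,c\}$ is newly zeroed, so $A_G$ is, up to permutation, the direct sum of $I_{p-3}$ with $B:=\left(\begin{smallmatrix}1&x&0\\ x&1&x\\ 0&x&1\end{smallmatrix}\right)$. Since $\det B=1-2x^2\le 0$ (equal to $-7/25$ when $x=4/5$) while $B_{11}=1>0$, the block $B$ is not positive definite, whence $A_G\not>0$. This proves the contrapositive and completes the theorem.

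The only place where genuine work is required is the explicit example: one must keep the full matrix positive definite while forcing the thresholded matrix to fail, and the band $1/\sqrt{2}\le|x|<1$ is precisely what makes both hold at once (the choice $A_{ac}=x^2$ being the value that maximizes $\det B'$ over the third entry). Everything else — the block-diagonal bookkeeping and the reduction to an induced $P_3$ — is routine, so I anticipate no serious obstacle beyond getting this $3\times 3$ construction right.
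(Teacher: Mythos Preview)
Your proof is correct. The overall strategy matches the paper's: for $(\Leftarrow)$ a block-diagonal argument, and for $(\Rightarrow)$ locate an induced $P_3$ in $G$, build a $3\times 3$ positive definite block that loses positive definiteness when the non-edge entry is zeroed, and extend to the full matrix by padding with an identity block. The one structural difference is how the induced $P_3$ is found: the paper argues by induction on $|V|$ (restricting to $\{1,\dots,p-1\}$ and then analysing how vertex $p$ attaches), whereas you bypass the induction and argue directly via a shortest path between two non-adjacent vertices in a non-complete component. The paper explicitly notes this shortcut in its remark following the proof, so your route is the acknowledged alternative rather than a genuinely new idea. Your specific $3\times 3$ example (the parametric family $B'$ with $A_{ac}=x^2$ and $1/\sqrt{2}\le |x|<1$) differs from the paper's ad hoc integer matrix, but it serves the identical purpose and is arguably tidier since the determinants $(1-x^2)^2$ and $1-2x^2$ make the transition transparent.
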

The $(\Leftarrow)$ part of the theorem is intuitive and straightforward but the $(\Rightarrow)$ part does come as somewhat of a surprise. It is also a stark reminder that indiscriminate or arbitrary thresholding of a positive definite matrix can quickly lead to loss of positive definiteness. 

The remainder of the paper is structured as follows. Section \ref{Section_Prelim} introduces basic terminology, notation and preliminaries. 
%Section \ref{Section_Case_study} demonstrates how the statement of the main result was arrived at by carefully studying the cases when $A$ is either a $2$-by-$2$, $3$-by-$3$ or $4$-by-$4$ positive definite matrix. %This section can be omitted by a reader wishing to immediately go to the proof of the main theorem. 
Section \ref{Section_Proof} provides a proof of the main theorem and further extensions. In particular, we consider the effect of thresholding when a positive definite matrix already has zeros according to a graph $G$. We also investigate the impact of thresholding when only elements smaller in magnitude than a prescribed value are thresholded, as is often done in practice. We also study the maximal class of matrices retaining positive definiteness and show that the set of diagonally dominant matrices is never maximal.  In Section \ref{Section_Decomp}, we characterize the class of matrices that retain positive definiteness when thresholded with respect to important classes of graphs. We discover that, even in simple cases, characterizing the maximal class leads to complicated algebraic relations. There is thus little hope of obtaining a characterization in terms of a simple property like diagonal dominance. 
%%%%%%%%%%%%%%%%%%%%%%%%%%%%%%%%%%%%%
\section{Preliminaries}\label{Section_Prelim}
\subsection{Graph theory}
Let $G=(V,E)$ be an undirected graph with vertex set $V = \{1, \dots, p\}$ and edge set $E$. Two vertices $a, b \in V$, $a \not= b$, are said to be \emph{adjacent} in $G$ if $(a,b) \in E$. We say that the graph $G'=(V', E')$ is a subgraph of  $G=(V, E)$,  denoted by  $G'\subset G$,  if $V'\subseteq V$ and  $E'\subset E$. In addition, if  $G'\subset G$  and  $E'=V'\times V'\cap E$, we say that $G'$ is an {induced}  subgraph of  $G$. We shall consider only induced subgraphs in what follows. For a subset  $A\subset V$, the induced subgraph  $G_A=(A, A\times A\cap E)$ is said to be the graph {induced} by $A$. A graph  $G$  is called \emph{complete} if every pair of vertices are adjacent. A  \emph{clique}  of  $G$  is an induced complete subgraph of $G$ that is not a proper subset of any other induced complete subgraphs  of  $G$. A \emph{path} of length  $k\geq 1$  from vertex $i$  to  $j$  is a finite sequence of distinct vertices  $v_0=i,\ldots, v_k=j$  in $V$  and edges $(v_0,v_1), \ldots, (v_{k-1}, v_k)\in E$. An \emph{$n$-{cycle}} in  $G$  is a path of length  $n$ with an additional edge connecting the two end points. A graph $G$ is called \emph{connected} if for any pair of distinct vertices $i, j\in V$ there exists a path between them.

Now let $A, B, C \subset V$ be three nonempty subsets of $V$. We say that $C$ \emph{separates} $A$ from $B$ if every path from a vertex $a \in A$ to a vertex $b \in B$ contains a vertex in $C$. The graph $G$ is said to be \emph{chordal} or \emph{decomposable} (\cite{Chavatal}, \cite{Golumbic}) if it does not contain a cycle of length $\geq 4$ as an induced subgraph. Alternatively, a graph $G$ is said to be \emph{chordal} or \emph{decomposable} if either $G$ is complete or if there exist subsets $A, B, C \subset V$ such that 
\begin{enumerate}
\item $V = A \cup B \cup C$; 
\item $C$ separates $A$ from $B$; 
\item $C$ is complete; 
\item $G_{A \cup C}$ and $G_{B \cup C}$ are decomposable.
\end{enumerate} 
A triple $(A,B,C)$ satisfying the first three properties above is said to be a \emph{decomposition} of $G$.  

Furthermore, let $C_1, C_2, \dots, C_k$ be an ordering of the cliques of a graph $G$. Define the \emph{history} up to clique $q$ as $H_q = C_1 \cup \dots \cup C_q$ and the \emph{separator} $S_q = C_q \cap H_{q-1}$. The ordering $\{C_1, \dots, C_k\}$ is said to be a \emph{perfect ordering} if for every $q=2, \dots, k$, there exists $p \leq q-1$ such that $S_q \subset C_p$. A well known result in graph theory is that every decomposable graph admits a perfect ordering of its cliques. Conversely, this property characterizes decomposable graphs. A special class of decomposable graphs are \emph{trees}. These are connected graphs on $n$ vertices with exactly $n-1$ edges. A tree can also be defined as a connected graph with no cycle of length $n \geq 3$, or a connected graph with a unique path between any two vertices. Alternatively, a tree is a connected decomposable graph with maximal clique size $2$.

In the remainder of the paper, we will assume that the vertices $V$ of $G$ are labeled by the set $\{1, 2, \dots, n\}$. As mentioned earlier, the graph induces a \emph{hard-thresholding} operation, mapping every symmetric matrix $A = (a_{ij}) \in M_n$ to a matrix $A_G$ in $M_n$ defined by 
\[
(A_G)_{ij} = \left\{\begin{array}{ll} a_{ij} & \textrm{if } (i,j) \in E \textrm{ or } i=j \\ 0 & \textrm{otherwise}\end{array}\right..
\]
We say that the matrix $A_G$ is obtained from $A$ by \emph{thresholding $A$ with respect to the graph $G$}.

\subsection{Linear Algebra}\label{Subsec_Linalg}
We denote by $\mathbb{P}^+$ the cone of positive matrices. Let $G = (V,E)$ be a graph and let
\[
\mathbb{P}_G = \{A \in \mathbb{P}^+ : a_{ij} = 0 \textrm{ if } (i,j) \not\in E, i \not=j\}
\]
be the set of positive definite matrices with fixed zeros according to $G$. 

A class of matrices which are guaranteed to retain positive definiteness upon thresholding by any arbitrary graph is the class of diagonally dominant matrices. We formalize this concept in the following well known results from linear algebra \cite{HJ_Matrix}. 

\begin{definition}[Strictly Diagonally Dominant matrices]
A matrix $A$ is said to be \emph{ strictly diagonally dominant} if for every $i=1,\dots,n$, 
\[
|a_{ii}| > \sum_{j \not = i} |a_{ij}|. 
\]
\end{definition}

\begin{theorem}[Gershgorin circle theorem, \cite{HJ_Matrix} - Theorem 6.1.1]
Let $A = (a_{ij}) \in M_p(\mathbb{R})$. For $i=1, \dots, p$, let $R_i = \sum_{j \not= i} |a_{ij}|$ and let $D_i = D(a_{ii},R_i)$ be the closed disc with center $a_{ii}$ and radius $R_i$. Then every eigenvalue of $A$ belongs to at least one of the discs $D_i$.  
\end{theorem}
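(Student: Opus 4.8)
The plan is the standard argument via a coordinate of an eigenvector of maximal modulus. Let $\lambda \in \mathbb{C}$ be an eigenvalue of $A$ and let $x = (x_1, \dots, x_p)^T \in \mathbb{C}^p$ be a corresponding eigenvector, so $x \neq 0$. First I would choose an index $k$ with $|x_k| = \max_{1 \leq j \leq p} |x_j|$; since $x \neq 0$, this gives $|x_k| > 0$.

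Next I would read off the $k$-th coordinate of the identity $Ax = \lambda x$, namely $\sum_{j=1}^{p} a_{kj} x_j = \lambda x_k$, and rearrange it as $(\lambda - a_{kk}) x_k = \sum_{j \neq k} a_{kj} x_j$. Applying the triangle inequality together with the bound $|x_j| \leq |x_k|$ for every $j$ then gives $|\lambda - a_{kk}|\,|x_k| \leq \sum_{j \neq k} |a_{kj}|\,|x_j| \leq |x_k| \sum_{j \neq k} |a_{kj}| = |x_k|\, R_k$. Dividing by $|x_k| > 0$ yields $|\lambda - a_{kk}| \leq R_k$, i.e. $\lambda \in D_k$, which is precisely the claim.

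I do not anticipate any real obstacle here; the only subtlety worth flagging is that the chosen coordinate $x_k$ must be nonzero, which is what licenses the final division, and this holds simply because an eigenvector is nonzero by definition. An alternative route would be to argue contrapositively: if $|\lambda - a_{ii}| > R_i$ for all $i$, then $A - \lambda I$ is strictly diagonally dominant and hence nonsingular, so $\lambda$ cannot be an eigenvalue of $A$. I would present the direct argument above since it is short and self-contained, and at most relegate the contrapositive observation to a one-line remark.
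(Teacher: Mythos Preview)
Your argument is the standard and correct proof of Gershgorin's theorem; nothing is missing and the subtlety about $|x_k|>0$ is handled properly. Note, however, that the paper does not prove this statement at all: it is quoted as a known result with a reference to Horn and Johnson, so there is no ``paper's own proof'' to compare against --- your proof simply supplies what the paper takes for granted.
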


\begin{corollary}[Positive definiteness of diagonally dominant matrices]
A strictly diagonally dominant matrix with positive diagonal entries is necessarily positive definite.
\end{corollary}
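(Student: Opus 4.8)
The plan is to deduce this directly from the Gershgorin circle theorem stated above. Let $A = (a_{ij}) \in M_p(\mathbb{R})$ be symmetric and strictly diagonally dominant with $a_{ii} > 0$ for all $i$. Since $A$ is real symmetric, all of its eigenvalues are real, so it suffices to show that every eigenvalue is strictly positive.

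First I would record the elementary consequence of strict diagonal dominance: for each $i$, writing $R_i = \sum_{j \neq i} |a_{ij}|$, we have $0 \leq R_i < a_{ii}$. Hence the Gershgorin disc $D_i = D(a_{ii}, R_i)$ is centered on the positive real axis with radius strictly smaller than its center, so every point $z \in D_i$ satisfies $\ree(z) \geq a_{ii} - R_i > 0$. Next, let $\lambda$ be any eigenvalue of $A$. By the Gershgorin circle theorem $\lambda \in D_i$ for some $i$, hence $\ree(\lambda) > 0$ by the previous observation; since $\lambda$ is real this gives $\lambda > 0$. As $\lambda$ was arbitrary, every eigenvalue of $A$ is positive, and therefore $A > 0$.

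There is essentially no obstacle here; the only point requiring a word of care is that Gershgorin by itself merely confines the spectrum to the open right half-plane, and one needs the symmetry of $A$ (equivalently, the realness of its spectrum) to upgrade ``positive real part'' to ``positive''. This hypothesis is already in force throughout the paper, where all matrices under consideration are real symmetric, so no additional assumption is needed.
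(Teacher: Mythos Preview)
Your argument is correct and is exactly the intended one: the paper states this corollary without an explicit proof, treating it as an immediate consequence of the Gershgorin circle theorem (and citing \cite{HJ_Matrix}), which is precisely the deduction you carry out. Your remark about needing symmetry to pass from ``positive real part'' to ``positive'' is also appropriate and consistent with the standing assumption in the paper that all matrices are real symmetric.
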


%\begin{corollary}[Levy--Desplanques theorem]
%Let $A$ be a diagonally dominant matrix, then $\det A \not = 0$. 
%\end{corollary}

\begin{corollary}[Positive definiteness of thresholded strictly diagonally dominant matrices]
Let $G$ be an arbitrary graph and consider a strictly diagonally dominant matrix A. Then $A_G$ is positive definite.
\end{corollary}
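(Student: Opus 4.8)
The plan is to observe that hard-thresholding acts on a symmetric matrix simply by replacing some off-diagonal entries with $0$ while leaving the diagonal untouched, and that each of these two effects preserves strict diagonal dominance; the conclusion is then immediate from the preceding corollary on positive definiteness of strictly diagonally dominant matrices.

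Concretely, first I would record that $(A_G)_{ii} = a_{ii}$ for every $i$, so the diagonal of $A_G$ coincides with that of $A$. Next, since each entry $(A_G)_{ij}$ equals either $a_{ij}$ or $0$, we have $|(A_G)_{ij}| \le |a_{ij}|$ for all $i \ne j$, and hence
\[
\sum_{j \ne i} |(A_G)_{ij}| \;\le\; \sum_{j \ne i} |a_{ij}| \;=\; R_i \;<\; |a_{ii}| \;=\; |(A_G)_{ii}|
\]
for every $i$; that is, $A_G$ is again strictly diagonally dominant. Finally, because $G$ is an undirected graph the set of positions that are zeroed out is symmetric in $(i,j)$, so $A_G$ inherits symmetry from $A$. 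Applying the corollary on positive definiteness of (strictly) diagonally dominant matrices to $A_G$ then gives $A_G > 0$.

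There is essentially no substantive obstacle here: the entire content of the statement is carried by the monotonicity of the Gershgorin radii $R_i$ under thresholding, together with the invariance of the diagonal entries. The only point worth flagging is the implicit hypothesis that the diagonal entries of $A$ are positive — precisely what is needed to invoke the previous corollary — which holds automatically, for instance, whenever $A$ is in addition assumed to lie in $\mathbb{P}^+$.
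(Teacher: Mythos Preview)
Your argument is correct and matches the paper's one-line justification (``follows immediately from the Gershgorin circle theorem''): you simply make explicit that thresholding can only shrink each Gershgorin radius while fixing the diagonal, so strict diagonal dominance is preserved and the previous corollary applies. Your remark about the implicit hypothesis of positive diagonal entries is well taken and is indeed the only point one must supply to make the inference to positive definiteness go through.
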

The proof follows immediately from the Gershgorin circle theorem.\ \\

A natural question is whether positive definiteness is in general retained when matrices are thresholded. Moreover, are diagonally dominant matrices the maximal class which retains positive definiteness when thresholded ? We investigate these and related questions in this paper. 

Finally, recall that a $(p_1+p_2) \times (p_1 + p_2)$ symmetric block matrix 
\[
M = \left(\begin{array}{cc}A & B \\ B^t & D\end{array}\right)
\]
where $A \in M_{p_1 \times p_1}, B \in M_{p_1 \times p_2}$, and $D \in M_{p_2 \times p_2}$, is positive definite if and only if $D$ is positive definite and $S_1 = A - BD^{-1}B^t$ is positive definite. The matrix $S_1$ is called the \emph{Schur complement} of $D$ in $M$. Alternatively, $M$ is positive definite if and only if $A$ is positive definite and $S_2 = D-B^tA^{-1}B$ is positive definite. The matrix $S_2$ is called the \emph{Schur complement} of $A$ in $M$.

%%%%%%%%%%%%%%%%%%%%%%%%%%%%%%%%%%%%%%

\section{Thresholding of arbitrary positive definite matrices}\label{Section_Proof}
We now proceed to prove the main result of this paper. 

\begin{theorem}\label{Th_Principal}
Let $A$ be an arbitrary symmetric matrix such that $A > 0$, i.e., $A \in \mathbb{P}^+$. Threshold $A$ with respect to the graph $G = (V,E)$ with the resulting thresholded matrix denoted by $A_G$. Then
\[
A_G > 0 \textrm{ for any } A \in \mathbb{P}^+ \Leftrightarrow G = \bigcup_{i=1}^\tau G_i \qquad \textrm{ for some } \tau \in \mathbb{N},  
\]
where $G_i$, $i=1, \dots, \tau$, are disconnected and complete components of $G$ (or equivalently $G$ is comprised of only complete disconnected components). 
\end{theorem}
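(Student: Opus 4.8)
The plan is to prove the two implications separately, the key combinatorial fact being that a graph fails to be a disjoint union of complete components exactly when it possesses an induced path on three vertices, i.e. three distinct vertices $i,j,k$ with $(i,j),(j,k)\in E$ but $(i,k)\notin E$. (Equivalently, $G$ is a disjoint union of complete components if and only if the reflexive and symmetric relation ``equal or adjacent'' on $V$ is transitive, and the only way transitivity can fail is through such a triple.)

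For the $(\Leftarrow)$ direction, suppose $G=\bigcup_{i=1}^\tau G_i$ where the $G_i$ are the connected components of $G$, each complete on vertex set $V_i$. For any $A\in\mathbb{P}^+$, after a simultaneous permutation of rows and columns grouping the $V_i$ together, $A_G$ is block diagonal, its diagonal blocks being the principal submatrices of $A$ indexed by the $V_i$. Each such block is positive definite, being a principal submatrix of a positive definite matrix, and a block diagonal matrix with positive definite blocks is positive definite; hence $A_G>0$ for every $A\in\mathbb{P}^+$.

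For the $(\Rightarrow)$ direction I would argue by contraposition: assuming $G$ is not a disjoint union of complete components, fix a triple $i,j,k$ as above. First I would exhibit a fixed $3\times 3$ positive definite matrix that loses positive definiteness when its $(i,k)$ entry is thresholded --- for instance, with unit diagonal and a parameter $\rho\in(1/\sqrt 2,1)$, the matrix $B$ having $B_{ij}=B_{jk}=\rho$ and $B_{ik}=\rho^2$, whose leading principal minors $1$, $1-\rho^2$, $(1-\rho^2)^2$ are all positive, but whose thresholded version (with the $(i,k)$ and $(k,i)$ entries replaced by $0$) has determinant $1-2\rho^2<0$ and so is not positive definite. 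Then I would embed this local example into dimension $n=|V|$ by taking $A\in M_n$ equal to $B$ on the principal submatrix indexed by $\{i,j,k\}$, equal to $1$ on all remaining diagonal entries, and equal to $0$ elsewhere. This $A$ is block diagonal with blocks $B$ and an identity matrix, so $A\in\mathbb{P}^+$; and thresholding with respect to $G$ retains the $(i,j)$ and $(j,k)$ entries, sets the $(i,k)$ entry to zero, and does nothing to the already-zero off-diagonal entries, so the principal submatrix of $A_G$ indexed by $\{i,j,k\}$ is precisely the thresholded $B$. Since this submatrix is not positive definite, and a matrix with a non--positive-definite principal submatrix cannot itself be positive definite, $A_G\not>0$, completing the contrapositive.

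I do not expect a genuine obstacle here: the substance of the argument is the explicit $3\times 3$ example together with the observation that the block-diagonal embedding is ``transparent'' to thresholding. The only point requiring a little care is verifying that $B$ is strictly positive definite while its thresholded form strictly fails positive definiteness, so that both $A\in\mathbb{P}^+$ and $A_G\notin\mathbb{P}^+$ hold with strict inequalities rather than merely semidefinite ones.
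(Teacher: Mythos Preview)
Your proof is correct and follows essentially the same route as the paper: the $(\Leftarrow)$ direction is identical, and for $(\Rightarrow)$ both arguments reduce to finding an induced $3$-path $i\!-\!j\!-\!k$, exhibiting a $3\times 3$ positive definite matrix that fails after thresholding the $(i,k)$ entry, and embedding it block-diagonally with an identity. The only difference is cosmetic---the paper reaches the induced $3$-path via induction on $|V|$ (and uses a different explicit $3\times 3$ witness), whereas you invoke directly the $P_3$-free characterization of cluster graphs; the paper itself notes this shortcut in its first remark after the proof.
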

\begin{proof}
($\Leftarrow)$ This direction is straightforward. Note that every principal submatrix of a positive definite matrix is also positive definite. So $A_G$ can be rearranged in a block diagonal form with each block $A_i > 0$ corresponding to each subgraph $G_i$. It follows easily that $A_G > 0$.
\ \\ 
($\Rightarrow$) We shall prove the contrapositive form, i.e., for a graph $G$ which is not a union of complete disconnected components, there exists a matrix $A \in \mathbb{P}^+$ such that $A_G \not> 0$. We shall use mathematical induction on $|V| = k$. Suppose first that $k=3$. We only need to check that the following graph, often termed as the $A_3$ graph, can lead to a loss of positive definiteness: 
\begin{center}
\includegraphics[width=3cm]{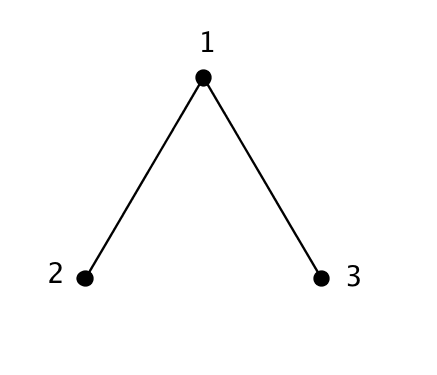}
\end{center}
Consider the following example: 
\[
A = \left(\begin{array}{ccc} 
4 & 3 & -3 \\
3 & 4 & -1 \\
-3 & -1 & 4
\end{array}\right) 
\quad 
A_G = \left(\begin{array}{ccc} 
4 & 3 & -3 \\
3 & 4 & 0 \\
-3 & 0 & 4
\end{array}\right). 
\]
One can easily check that $A > 0$, but $A_G$ is not. The theorem is hence true for $k=3$. 

Let us now assume the inductive hypothesis when $k=p-1$, i.e., let $G = (V, E)$ be such that $|V| = p-1$. Then 
\[
A_G > 0 \textrm{ for any } A_{(p-1) \times (p-1)} \in \mathbb{P}_{p-1}^+ \Rightarrow G = \bigcup_{i=1}^\tau G_i
\]
where $G_i$ are complete disconnected components. Let us now show that the result holds true when $k=p$. Let $G' = (V',E')$ be an arbitrary graph such that $|V'|=p$. Let $G = (V,E)$ be the subgraph of $G'$ induced by the vertices $\{1,\dots,p-1\}$. If a $p \times p$ positive definite matrix $A'$ retains positive definiteness when thresholded with respect to $G'$, then the $(p-1) \times (p-1)$ principal submatrix of $A'$ will also be positive definite when thresholded with respect to $G$. Therefore, if every matrix $A' \in \mathbb{P}^+$ stays positive definite when thresholded with respect to $G'$ then, by the inductive hypothesis, the graph $G$ can be decomposed as a union of complete disconnected components. So now on, let us assume that $G$ is a union of complete disconnected components and assume to the contrary that $G'$ is not a union of complete disconnected components. Since $G'$ is not a union of complete disconnected components, vertex $p$ can either a) be connected to exactly one complete connected component $C_1$ of $G$, where $C_1 \cup \{p\}$ is not a complete subgraph of $G'$ or b) to two or more connected components of $G$. In the first case, there exists vertices $x, y \in C_1$ such that $\{p,x,y\}$ induces the following subgraph from $G'$:
\begin{center}
\includegraphics[width=2.5cm]{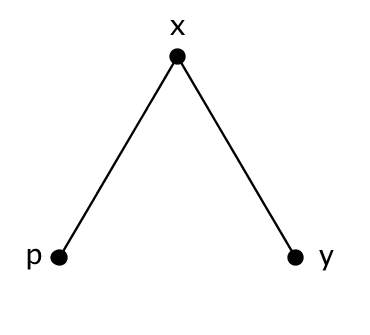}
\end{center}
In the second case, there exists vertices $x \in C_1$ and $y \in C_2$, where $C_1$ and $C_2$ are different connected components of $G$, such that $\{p,x,y\}$ induces the following subgraph from $G'$
\begin{center}
\includegraphics[width=2.5cm]{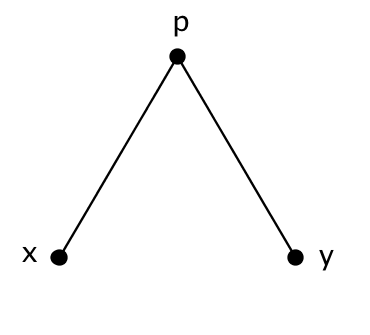}
\end{center}
 Let us construct $A' > 0$ as follows
\[
\bordermatrix{
 & p & x & y & \cr
p & a_{pp} & a_{px} & a_{py} & \dots \cr
x & a_{xp} & a_{xx} & a_{xy} & \dots \cr
y & a_{yp} & a_{yx} & a_{yy} & \dots \cr
 & \vdots & \vdots & \vdots & \ddots
}, 
\]
where the $3$ by $3$ upper left block corresponds to the example above for the $k=3$ case. It is possible to choose the other entries of $A'$ such that $A' > 0$ (a concrete example can be constructed by augmenting the $3$ by $3$ submatrix with a block diagonal identity matrix of dimension $p-3$). By the $k=3$ case, when $A'$ is thresholded with respect to $G'$, the $3$ by $3$ block loses positive definiteness, and consequently $A'_{G'} \not> 0$. This yields a contradiction to the initial hypothesis that $A'_{G'} > 0$. Therefore, the graph $G'$ decomposes as a union of complete disconnected components. 
\end{proof}
{\it Remarks: }\begin{enumerate}\item An alternative proof, which circumvents the inductive argument, would follow from first proving that the $A_3$ graph is necessarily an induced subgraph of $G' $ when $G'$ is not a union of complete disconnected components. The remainder of the proof would follow the same line of argument as above. 
\item Note that the proof for any arbitrary fixed dimension hinges on losing positive definiteness in the $p=3$ case. 
\item The result above holds true if we replace the class of positive definite matrices by the class of positive semidefinite matrices since the former set is a subset of the latter.  Also, the result remains valid if we restrict the set of positive semidefinite matrices to the subclass which are singular (i.e. matrices which are positive semidefinite but are not positive definite).
\end{enumerate}

 We formalize the extension of our results to the class of singular positive semidefinite matrices in the following corollary.  

\begin{corollary}
Let $A$ be an arbitrary symmetric matrix such that $A \geq 0$ and $\det A = 0$. Threshold $A$ according to a graph $G = (V,E)$ with the resulting thresholded matrix denoted by $A_G$. Then
\[
A_G \geq  0 \textrm{ for any } A \geq 0 \textrm{ with } \det A =0 \Leftrightarrow G = \bigcup_{i=1}^\tau G_i \qquad \textrm{ for some } \tau \in \mathbb{N},  
\]
where $G_i$, $i = 1, \dots, \tau$, are disconnected and complete components of $G$ (or equivalently $G$ is comprised of only complete disconnected components). 
\end{corollary}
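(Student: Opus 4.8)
The plan is to reduce directly to Theorem \ref{Th_Principal} rather than to redo an induction, exploiting the fact that the only new feature here is the constraint $\det A = 0$. For the $(\Leftarrow)$ direction the argument is literally the one already used: if $G = \bigcup_{i=1}^{\tau} G_i$ with the $G_i$ complete and pairwise disconnected, then after a simultaneous permutation of rows and columns $A_G$ is block diagonal, each block being a principal submatrix of $A$; since every principal submatrix of a positive semidefinite matrix is positive semidefinite, each block is $\geq 0$, hence $A_G \geq 0$. The hypothesis $\det A = 0$ is irrelevant in this direction.

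For $(\Rightarrow)$ I would prove the contrapositive. Suppose $G = (V,E)$ with $|V| = p$ is not a union of complete disconnected components. The proof of Theorem \ref{Th_Principal} constructs a matrix $A \in \mathbb{P}^+$ (the $3 \times 3$ example padded with an identity block of size $p-3$) such that $A_G$ contains
\[
\left(\begin{array}{ccc} 4 & 3 & -3 \\ 3 & 4 & 0 \\ -3 & 0 & 4 \end{array}\right)
\]
as a $3 \times 3$ principal submatrix. This block has determinant $-8 < 0$ while its leading $2 \times 2$ minor is $7 > 0$, so it has a strictly negative eigenvalue; by Cauchy interlacing for principal submatrices, $A_G$ itself then has a strictly negative eigenvalue, say $\mu < 0$. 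The point to keep in mind is that we need a \emph{strictly} negative eigenvalue here, not merely a zero one.

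Now set $\lambda := \lambda_{\min}(A)$, which is strictly positive since $A \in \mathbb{P}^+$, and define $B := A - \lambda I$. Then $B$ is symmetric, $B \geq 0$, and $\det B = 0$ because $\lambda$ is an eigenvalue of $A$. Since thresholding leaves the diagonal untouched, $B_G = (A - \lambda I)_G = A_G - \lambda I$, so the eigenvalues of $B_G$ are exactly those of $A_G$ shifted down by $\lambda$; in particular $B_G$ has the eigenvalue $\mu - \lambda < 0$, so $B_G \not\geq 0$. Thus $B$ is a singular positive semidefinite matrix exhibiting loss of positive semidefiniteness under thresholding by $G$, which is exactly what the contrapositive demands.

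I do not anticipate a genuine obstacle: the substantive content — the existence of a positive definite matrix whose $G$-thresholding fails to be positive semidefinite whenever $G$ is not a union of complete disconnected components — is already furnished by Theorem \ref{Th_Principal}, and the shift $A \mapsto A - \lambda_{\min}(A)\, I$ is the only additional device needed. The sole point requiring care is the verification, noted above, that the negative eigenvalue produced in Theorem \ref{Th_Principal} is strict, so that subtracting the positive quantity $\lambda$ keeps it negative and the shifted matrix remains outside the positive semidefinite cone.
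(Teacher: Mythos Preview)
Your proof is correct and follows essentially the same approach as the paper: both prove the contrapositive of $(\Rightarrow)$ by taking the positive definite matrix $A$ from Theorem \ref{Th_Principal} and shifting to $B = A - \lambda_{\min}(A)\,I$, then observing $B_G = A_G - \lambda_{\min}(A)\,I$. Your extra care in securing a \emph{strictly} negative eigenvalue of $A_G$ via Cauchy interlacing is harmless but unnecessary, since even $\mu = 0$ would yield $\mu - \lambda < 0$ once $\lambda = \lambda_{\min}(A) > 0$.
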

\begin{proof}
($\Leftarrow$) This direction is clear since a principal submatrix of a positive semidefinite matrix is positive semidefinite. \ \\
($\Rightarrow$) We shall prove the contrapositive form. Suppose $G$ is not a union of complete disconnected components. According to Theorem \ref{Th_Principal}, there is a matrix $A > 0$ such that $A_G$ is not positive definite. Let $\lambda_1$ and $\eta_1$ be the smallest eigenvalues of $A$ and $A_G$ respectively. Note that $\lambda_1 > 0$ but $\eta_1 \leq 0$. Consider the matrix $B = A - \lambda_1 I$. This matrix satisfies $B \geq 0$ and $\det B = 0$. In addition, $B_G = A_G - \lambda_1 I$ and so the smallest eigenvalue of $B_G$ is $\eta_1 - \lambda_1 < 0$. Hence there exist a matrix $B \geq 0$ with $\det B = 0$ such that $B_G \not\geq 0$.  
\end{proof}

Theorem \ref{Th_Principal} demonstrates that ensuring positive definiteness after thresholding for the whole cone $\mathbb{P}^+$ is rather difficult; in the sense that only when $G$ is a union of complete disconnected components can we ensure positive definiteness of the resulting thresholded matrix. It is therefore natural to try to threshold a smaller class of matrices than the whole space of positive definite matrices to see if positive definiteness is retained. Consider for example the case where a positive definite matrix $A$ already has zeros according to a graph $G$. This will be the case, for example, when $A$ is a sparse positive definite matrix. If $A$ where to be thresholded, will it remain positive definite ? Specifically, for which subgraphs $H$ of $G$ will $A_H$ retain positive definiteness ? The next theorem significantly generalizes Theorem \ref{Th_Principal} and demonstrates that the answer is essentially the same. 

\begin{theorem}\label{Th_Hard_Thres_General}
Let $G = (V,E)$ be an undirected graph and let $H = (V,E')$ be a subgraph of $G$ i.e., $E' \subset E$. Then $A_H > 0$ for every $A \in \mathbb{P}_G$ if and only if $H = G_1 \cup \dots \cup G_k$ where $G_1, \dots, G_k$ are disconnected induced subgraphs of $G$. 
\end{theorem}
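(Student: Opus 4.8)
The plan is to follow the architecture of the proof of Theorem~\ref{Th_Principal}: the $(\Leftarrow)$ direction by a block‑diagonalization argument, and the $(\Rightarrow)$ direction by contraposition, producing for every ``bad'' $H$ an explicit witness $A\in\mathbb{P}_G$ with $A_H\not>0$. The one genuine change is that the fixed $3\times3$ gadget $A_3$ must be upgraded to a \emph{family} of gadgets, one for each cycle length. First I would record the right reformulation of the combinatorial hypothesis: writing $H=G_1\cup\dots\cup G_k$ with the $G_i$ disconnected induced subgraphs of $G$ is equivalent to saying that there is a partition $V=V_1\cup\dots\cup V_k$ with $G_i=G_{V_i}$ and no edge of $H$ joining distinct blocks — equivalently, for every connected component $C$ of $H$ one has $H_C=G_C$, i.e.\ the thresholding removed only edges of $G$ running between distinct components of $H$. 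Negating this: $H$ fails the condition iff there is a component $C$ of $H$ and an edge $(x,y)\in E\setminus E'$ with $x,y\in C$.

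For $(\Leftarrow)$ the argument is the one already used in Theorem~\ref{Th_Principal}. Fix $A\in\mathbb{P}_G$ and let $V_1,\dots,V_k$ be the vertex sets of the connected components of $H$. Since $H$ has no edge between distinct $V_i$, every entry of $A_H$ joining two different blocks vanishes, so $A_H$ is block diagonal along $V_1,\dots,V_k$. Inside a block $V_i$: an off‑diagonal pair $(u,v)$ with $(u,v)\notin E$ is already zero in $A$ and stays zero; while if $(u,v)\in E$ then, because $H_{V_i}=G_{V_i}$, we have $(u,v)\in E'$ and the entry is retained. Hence $A_H$ restricted to $V_i$ equals the principal submatrix $A[V_i]$, which is positive definite, so $A_H=\bigoplus_i A[V_i]>0$.

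For $(\Rightarrow)$ I would argue the contrapositive. Given a component $C$ of $H$ and an edge $(x,y)\in E\setminus E'$ with $x,y\in C$, pick a path $x=v_0-v_1-\dots-v_m=y$ in $H$; since $(x,y)\notin E'$ necessarily $m\ge2$, so $W=\{v_0,\dots,v_m\}$ has at least three vertices, its path edges all lie in $E'\subseteq E$, and the chord $(v_0,v_m)=(x,y)$ lies in $E\setminus E'$. Now build the witness on $W$: let $L$ be the graph Laplacian of the path $v_0-\dots-v_m$, an $(m{+}1)\times(m{+}1)$ positive semidefinite matrix that is singular with the all‑ones vector in its kernel, and set $B_\gamma=L+\gamma\,(e_{v_0}e_{v_m}^{\,t}+e_{v_m}e_{v_0}^{\,t})$, which is supported on the $(m{+}1)$‑cycle on $W$. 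The zero eigenvalue of $L$ is simple (the path is connected) and its first‑order shift under this perturbation equals $\tfrac{2}{m+1}>0$, so by continuity of eigenvalues $B_\gamma>0$ for all sufficiently small $\gamma>0$; fix such a $\gamma$ and put $B=B_\gamma$. Extend $B$ to $A\in M_n$ by $A=B\oplus I$ with zero coupling between $W$ and $V\setminus W$. Then $A>0$; the only nonzero off‑diagonal entries of $A$ lie on edges of $E'$ (the path) and on $(x,y)\in E$, so $A\in\mathbb{P}_G$; and thresholding with respect to $H$ retains the path entries but zeros the $(v_0,v_m)$ entry, so $A_H$ restricted to $W$ is exactly $L$, which is singular. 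Hence $A_H\not>0$, contradicting the hypothesis.

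The main obstacle is precisely this $(\Rightarrow)$ construction. In Theorem~\ref{Th_Principal} the loss of positive definiteness could always be localized to a single $3\times3$ principal block, but here the thresholded edge may be ``short‑circuited'' only through an arbitrarily long path of $H$, so a single gadget no longer suffices and one needs a uniform witness on every cycle length. The Laplacian‑plus‑chord family $B_\gamma$ does this, the only delicate point being the verification that $B_\gamma>0$ for small $\gamma$; I would justify it via the simplicity of the path Laplacian's zero eigenvalue together with the strictly positive sign of its perturbation, noting that a reader preferring a wholly elementary route can instead expand the leading principal minors of $B_\gamma$ (a routine periodic‑tridiagonal computation). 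A secondary point worth stating carefully up front is the reformulation of the combinatorial hypothesis in the first paragraph, since that is what makes both implications align cleanly with the scheme of Theorem~\ref{Th_Principal}.
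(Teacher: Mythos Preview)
Your proof is correct, and its overall architecture matches the paper's: the $(\Leftarrow)$ direction by block diagonalization, and the $(\Rightarrow)$ direction by contraposition, locating an edge $(x,y)\in E\setminus E'$ whose endpoints lie in a common component of $H$, threading a path through $H$ to form a cycle in $G$, and then building a witness supported on that cycle which is p.d.\ but becomes non--p.d.\ once the chord is erased. Your explicit reformulation of the combinatorial condition (each connected component of $H$ is an induced subgraph of $G$) is also exactly what the paper uses, though the paper leaves it slightly more implicit.

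The genuine difference is in the cycle witness. The paper builds an explicit tridiagonal--plus--corner matrix $M_n(a,b)$, proves a closed formula for its determinant by induction, and then chooses $a,b$ by a discriminant analysis so that $M_n>0$ but setting $a=0$ makes the determinant negative. Your construction is more conceptual: take the path Laplacian $L$ (p.s.d.\ with a \emph{simple} zero eigenvalue since the path is connected), perturb by $\gamma(e_{v_0}e_{v_m}^{\,t}+e_{v_m}e_{v_0}^{\,t})$, and observe that the first--order shift of the zero eigenvalue is $2/(m{+}1)>0$, so $B_\gamma>0$ for small $\gamma$; thresholding kills the chord and returns $L$, which is singular. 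Your route avoids all determinant bookkeeping and makes the mechanism transparent. The paper's route, in exchange, yields explicit parameter ranges that it reuses downstream (in particular, in Theorem~\ref{Th_hard_thresholding_level_ep} the precise control of $|a|$ and $b$ is needed to arrange $|a|<1<b$), something your perturbative argument does not immediately supply.
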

\begin{proof}
The $(\Leftarrow)$ part of the theorem is clear since a principal submatrix of a positive definite matrix is positive. \ \\
$(\Rightarrow)$ We shall prove the contrapositive form. Suppose $H$ cannot be written as a union of disconnected induced subgraphs of $G$. Let $H_1, \dots, H_m$ be the connected components of $H$. Since $H = H_1 \cup \dots \cup H_m$ and the components are disconnected, one of the $H_i$ is not an induced subgraph of $G$. Without loss of generality, assume $H_1$ is not an induced subgraph. Then there are two vertices $u, v \in H_1$ such that $(u,v) \not\in E'$, but $(u,v) \in E$. Since $H_1$ is connected, there is a path $u=v_1, \dots, v_k = v$ connecting $u$ and $v$ in $H_1$. Let $C$ be the cycle obtained by adding the edge $(u,v)$ to the end of this path. This cycle belongs to $G$. Therefore, when the matrix $A$ is thresholded with respect to $H$, a cycle belonging to $G$ is broken. In the remainder of the proof, we will prove that, for any cycle $C_n$ of length $n \geq 3$, there is a positive definite matrix $A = A_{C_n}$ such that $A$ loses positive definiteness when thresholded by removing an edge of $C_n$. This together with the fact that any principal submatrix of a positive definite matrix is also necessarily positive definite will prove the theorem. 

Let $C_n$ be a cycle of length $n \geq 3$. Consider the matrix 
\[
A_n = \left(\begin{array}{cccccccc}
\alpha & 1 & 0 & 0 & \dots & 0 & 0 & \mc{a}\\
1 & 2 & 1 & 0 & \dots & 0 & 0 & \mc{0}\\
0 & 1 & 2 & 1 & \dots & 0 & 0 & \mc{0}\\
\vdots & \vdots & \vdots & \vdots & \ddots & 2 & 1 & \mc{0}  \\
0 & 0 & 0 & 0 & 0 & 1 & 2 & \mc{b} \\
\cline{1-8}
a & 0 & 0 & 0 & \dots & 0 & b & \mc{\beta}
\end{array}\right)
\]
where the upper left block is the matrix of a path of length $n-1$ with off-diagonal elements equal to $1$ and diagonal elements equal to $2$ except the $(1,1)$ element which is equal to $\alpha$.  Clearly, $A_n = (A_n)_{C_n}$. We first claim that 
\begin{equation}\label{eq_det_an}
\det A_n = -(n-2) \beta + (n-1)\alpha \beta + (-1)^{n+1} 2 ab - (n-1)a^2 + (n-3)b^2 - (n-2) \alpha b^2. 
\end{equation}
We will prove this formula by induction on $n$. If $n=3$, the matrix reduces to 
\[
A_3 = \left(\begin{array}{ccc}
\alpha & 1 & a \\
1 & 2 & b \\
a & b & \beta
\end{array}\right)
\]
and
\begin{eqnarray*}	
\det A_3 &=& \alpha(2\beta - b^2) - (\beta - ab) + a(b-2a) \\
	      &=& -\beta + 2 \alpha \beta + 2ab - 2a^2 - \alpha b^2.
\end{eqnarray*}
So formula (\ref{eq_det_an}) is valid for $n=3$. Suppose, by the inductive hypothesis, that formula (\ref{eq_det_an}) is valid for $k=n-1$, and now consider the matrix $A_n$. The determinant of $A_n$ is given by $\beta$ times the determinant of the Schur complement $S$ of the lower right block. Notice that the matrix $S$ is equal to $A_{n-1}$ with $\alpha$ replaced by $\alpha - a^2/\beta$, $\beta$ replaced by $2-b^2/\beta$, $b$ by $1$, and $a$ by $-ab/\beta$.  Therefore, by the induction hypothesis, 
\begin{eqnarray*}
\det S &=& -(n-3)\left(2-\frac{b^2}{\beta}\right) + (n-2)\left(\alpha-\frac{a^2}{\beta}\right)\left(2-\frac{b^2}{\beta}\right) + (-1)^{n+1} 2\frac{ab}{\beta} \\ &-&(n-2)\frac{a^2b^2}{\beta^2} + (n-4) - (n-3)\left(\alpha-\frac{a^2}{\beta}\right) \\
&=& 2-n+\alpha(n-1) + \frac{b^2}{\beta} (n-3 - \alpha(n-2)) - \frac{a^2}{\beta} (n-1) + 2(-1)^{n+1} \frac{ab}{\beta}. 
\end{eqnarray*}
Hence,
\[
\det A_n = \beta \det S = -(n-2)\beta +(n-1)\alpha \beta + (-1)^{n+1} 2ab - (n-1)a^2 + (n-3)b^2 - (n-2)\alpha b^2, 
\]
and so, formula (\ref{eq_det_an}) is valid for $k=n$. The expression in (\ref{eq_det_an}) is therefore valid for every $n \geq 3$.  

Now, let $M_n = M_n(a,b)$ be the matrix $A_n$ with $\alpha = \beta =2$. We will prove that we can choose $a$ and $b$ such that $M_n$ is positive definite, but $M_n$ is not positive definite if $a$ is set to $0$. 

By using the formula for $\det A_n$, it is easily shown that the determinant of the $k$-th principal submatrix, corresponding to the upper left corner of $M_n$, is equal to
\[
 -2(k-2)  + 4(k-1)  + (k-3) - 2(k-2) = 1 + k > 0 \quad \textrm{ for } k=1, \dots, n-1. 
\] 
Therefore, the upper left $(n-1) \times (n-1)$ block of $M_n$ is always positive definite. The determinant of $M_n$ is given by
\[
p(a,b) = 2n -(n-1)a^2 - (n-1)b^2 + (-1)^{n+1}2 ab 
\]
Note that $p(a,b)$ is a concave quadratic equation in $a$ for fixed $b$. If the $(1,n)$ and $(n,1)$ elements are thresholded, i.e., if $a$ is set to $0$, the new determinant is given by 
\[
p(0,b):= q(b) = 2n - (n-1)b^2. 
\]
So in order for the thresholded matrix to lose positive definiteness, we must choose $b$ such that $q(b) < 0$ i.e., $b^2 > 2n/(n-1)$. Hence let
\[
b^2 = \frac{2n}{n-1} + \epsilon
\]
where $\epsilon > 0$. We claim that, if $\epsilon$ is small enough, we can choose $a$ such that $p(a,b) > 0$. Indeed, the discriminant of $p$ (seen as a function of $a$), when the above value of $b^2$ is substituted, is given by
\begin{eqnarray*}
\Delta &=& 4b^2 + 4(n-1)\left[2n-(n-1)b^2\right] \\
&=& 4\left(\frac{2n}{n-1}+\epsilon\right) - 4(n-1)^2\epsilon. 
\end{eqnarray*}
Hence we can choose $\epsilon > 0$ small enough such that $\Delta > 0$. With this choice of $\epsilon$, the polynomial $p(a,b)$ has two real roots and so we can choose $a$ such that $p(a,b) > 0$. Therefore, for these choices of $a$ and $b$, the matrix $M_n$ is positive definite, but loses positive definiteness if $a$ is thresholded to zero. The above arguments provide the needed example for the cycle $C_n$ and therefore proves the theorem. 
\end{proof}
Theorem \ref{Th_Hard_Thres_General} in fact also yields the result of Theorem \ref{Th_Principal} as a special case. Corollary \ref{Cor_Th_Principal} below gives a formal proof of this claim and thus provides a better understanding of Theorem \ref{Th_Principal}. In addition, Corollary \ref{Cor_Hard_Thres_General} below aims to characterize the class of sparse positive definite matrices which retain positive definiteness regardless of the thresholding subgraph. 
\begin{corollary}\label{Cor_Th_Principal}
Let $G$ be an undirected graph. Assume $A_G > 0$ for every $A > 0$. Then $G$ is a union of complete disconnected components. 
\end{corollary}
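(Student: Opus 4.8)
The plan is to obtain Corollary \ref{Cor_Th_Principal} as the special case of Theorem \ref{Th_Hard_Thres_General} in which the ``ambient'' graph is taken to be complete. Let $n = |V|$ and let $K_n$ denote the complete graph on $V$. The first step is the bookkeeping observation that $\mathbb{P}_{K_n} = \mathbb{P}^+$: the defining restriction ``$a_{ij} = 0$ whenever $(i,j)\notin E$'' is vacuous for $K_n$, since every pair of distinct vertices is an edge, so $\mathbb{P}_{K_n}$ carries no zero constraints and equals the whole cone of positive definite matrices.

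The second step is to apply Theorem \ref{Th_Hard_Thres_General} with $H = G$. This is legitimate because $G$ is trivially a subgraph of $K_n$ (every graph on $V$ is), so the pair $(K_n, G)$ satisfies the hypotheses of that theorem. Moreover, under the identification of the previous paragraph, the assumption of the corollary --- that $A_G > 0$ for every $A > 0$ --- is literally the hypothesis ``$A_H > 0$ for every $A \in \mathbb{P}_{K_n}$'' of the theorem. Hence Theorem \ref{Th_Hard_Thres_General} applies and yields a decomposition $G = G_1 \cup \dots \cup G_k$ into disconnected induced subgraphs of $K_n$.

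The third step is to note that an induced subgraph of a complete graph is itself complete: for any $W \subseteq V$, the graph $(K_n)_W$ contains every edge on $W$, so each $G_i$, being an induced subgraph of $K_n$ on its own vertex set, contains all edges among its vertices and is therefore complete. Combining this with the previous step gives that $G$ is a union of complete disconnected components, which is the assertion.

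I do not expect a genuine obstacle here; the mathematical content lies entirely in Theorem \ref{Th_Hard_Thres_General}, and the corollary is essentially an unwinding of definitions. The only two points that merit an explicit word are the identification $\mathbb{P}_{K_n} = \mathbb{P}^+$ and the elementary graph-theoretic fact that induced subgraphs of complete graphs are complete, both immediate from the definitions collected in Section \ref{Section_Prelim}.
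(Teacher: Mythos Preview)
Your proposal is correct and follows exactly the same approach as the paper's own proof, which consists of the single sentence ``Apply Theorem \ref{Th_Hard_Thres_General} with $G$ equal to a complete graph and $H$ equal to the graph $G$ specified in the statement of the corollary.'' Your version simply spells out the two implicit observations (that $\mathbb{P}_{K_n} = \mathbb{P}^+$ and that induced subgraphs of a complete graph are complete), which the paper leaves to the reader.
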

\begin{proof}
Apply Theorem \ref{Th_Hard_Thres_General} with $G$ equal to a complete graph and $H$ equal to the graph $G$ specified in the statement of the corollary. 
\end{proof}

\begin{corollary}\label{Cor_Hard_Thres_General}
Let $G$ be an undirected graph. Then $A_H > 0$ for every matrix $A \in \mathbb{P}_G$ and every subgraph $H$ of $G$ if and only if $G$ is a union of trees. 
\end{corollary}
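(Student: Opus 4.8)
The plan is to deduce both implications from Theorem \ref{Th_Hard_Thres_General} together with the explicit family $M_n$ built in its proof; no genuinely new matrix needs to be constructed. The one structural observation driving everything is: \emph{when $G$ is a forest, every spanning subgraph $H$ of $G$ is automatically a disjoint union of induced subgraphs of $G$.} Indeed, let $H_1,\dots,H_m$ be the connected components of $H$. Each $H_i$ lies inside a single tree component of $G$; if $u,v\in V(H_i)$ satisfy $(u,v)\in E$, then the unique $G$-path from $u$ to $v$ is the edge $(u,v)$ itself, while connectedness of $H_i$ supplies a $G$-path from $u$ to $v$ running inside $H_i$. Uniqueness of paths in a forest forces these to coincide, so $(u,v)\in E(H_i)$; hence $E\cap(V(H_i)\times V(H_i)) = E(H_i)$ and $H_i$ is an induced subgraph of $G$. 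Since $H = H_1\cup\dots\cup H_m$ with the $H_i$ pairwise non-adjacent in $H$, Theorem \ref{Th_Hard_Thres_General} immediately gives $A_H>0$ for every $A\in\mathbb{P}_G$, which settles the $(\Leftarrow)$ direction.

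For $(\Rightarrow)$ I would argue the contrapositive: suppose $G$ is not a union of trees, so some connected component of $G$ contains a cycle. Taking a shortest cycle, $G$ contains a cycle $C_n$ on vertices $w_1,\dots,w_n$, $n\ge 3$, as a subgraph (chordlessness is not even needed here). Now recall from the proof of Theorem \ref{Th_Hard_Thres_General} the $n\times n$ matrix $M_n = M_n(a,b)$, whose graph is exactly the cycle on $\{1,\dots,n\}$, for which $a,b$ can be chosen so that $M_n>0$ but $M_n$ ceases to be positive definite once its $(1,n)$ and $(n,1)$ entries are set to $0$. Relabel its indices by $w_1,\dots,w_n$ and extend $M_n$ to a $p\times p$ matrix $A$ by placing an identity block on the remaining $p-n$ vertices and zeros elsewhere. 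Then $A$ is block diagonal with positive definite blocks, so $A>0$, and its only nonzero off-diagonal entries sit on edges of $C_n\subseteq G$, so $A\in\mathbb{P}_G$. Let $H$ be the spanning subgraph of $G$ obtained by deleting the single edge $(w_1,w_n)$. Thresholding $A$ with respect to $H$ only zeros out the $(w_1,w_n)$ and $(w_n,w_1)$ entries, turning the $M_n$ block into its thresholded (non-p.d.) version while leaving the identity block untouched; hence $A_H\not>0$. This produces the required counterexample and completes the contrapositive.

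I expect the only real subtlety to be the forest-to-induced-subgraph reduction in the $(\Leftarrow)$ direction: getting the path-uniqueness argument exactly right and checking that the hypothesis ``$H=\bigcup G_i$ with the $G_i$ disconnected induced subgraphs of $G$'' of Theorem \ref{Th_Hard_Thres_General} is literally met — in particular, that any $G$-edges running between distinct components of $H$ are irrelevant, since $H$ need only be a \emph{spanning} subgraph, not an induced one. Everything else is bookkeeping: the $(\Rightarrow)$ direction is just a transport of the $C_n$ example already in hand, and the block-diagonal padding is the same device used in the proof of Theorem \ref{Th_Principal}. It is also worth flagging the mild terminological point that ``subgraph $H$ of $G$'' is meant here in the spanning sense (same vertex set, fewer edges), consistently with Theorem \ref{Th_Hard_Thres_General}, since $A_H$ must have the same size as $A$.
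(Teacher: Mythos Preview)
Your proposal is correct and follows essentially the same approach as the paper. For $(\Leftarrow)$ the paper simply asserts in one line that ``every subgraph of a tree is a union of disconnected subtrees'' and then invokes Theorem~\ref{Th_Hard_Thres_General}; your path-uniqueness argument is exactly the proof of that assertion, just spelled out. For $(\Rightarrow)$ the paper applies the \emph{statement} of Theorem~\ref{Th_Hard_Thres_General} as a black box (every subgraph $H$ must decompose into disconnected induced pieces, hence $G$ can contain no cycle), whereas you go through the contrapositive by transporting the explicit $M_n$ example from the \emph{proof} of Theorem~\ref{Th_Hard_Thres_General}; these are the same argument at different levels of packaging.
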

\begin{proof}
($\Leftarrow$) This follows from the fact that every subgraph of a tree is a union of disconnected subtrees.\ \\ 
($\Rightarrow$) From Theorem \ref{Th_Hard_Thres_General}, every subgraph of $G$ must be a union of disconnected induced subgraphs. Therefore $G$ does not contain any cycle of length $\geq 3$ and thus $G$ is a union of trees. 
\end{proof}
Corollary \ref{Cor_Hard_Thres_General} states that if we start with a positive definite matrix $A$ that already has zeros according to a tree, then any further thresholding of $A$ will retain positive definiteness. 

The results above consider thresholding of elements regardless of their magnitude. In practical applications however, hard-thresholding is often performed on the smaller elements of the positive definite matrix in order to induce sparsity. A more natural question therefore would be to ask: if we threshold rather the small elements of a positive definite matrix, would the new thresholded matrix be positive definite ?  We show below that this is possible in general only if the original matrix has zeros according to a tree. 

First, we introduce some notation. We will say that the matrix $B$ is the \emph{hard-thresholded version of $A$ at level $\eta$} if $b_{ij} = a_{ij}$ when $|a_{ij}| > \eta$ or $i=j$, and $b_{ij}=0$ otherwise. 

\begin{theorem}\label{Th_hard_thresholding_level_ep}
Let $G$ be an undirected graph with $n \geq 3$ vertices and consider $\mathbb{P}_G$, the class of positive definite matrices with zeros according to $G$. The following are equivalent: 
\begin{enumerate}
\item There exists $\eta > 0$ such that the hard-thresholded version of $A$ at level $\eta$ is positive definite for every $A \in \mathbb{P}_G$; 
\item $G$ is a tree. 
\end{enumerate}
Moreover, if $G$ is a tree, the hard-thresholded version of $A$ at level $\eta$ is positive definite for every $\eta > 0$. 
\end{theorem}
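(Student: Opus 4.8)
The plan is to deduce the equivalence from Theorem~\ref{Th_Hard_Thres_General} and Corollary~\ref{Cor_Hard_Thres_General}, adding only a rescaling trick. For $(2)\Rightarrow(1)$ together with the ``Moreover'' clause, I would argue as follows: assuming $G$ is a tree, fix $\eta>0$ and $A\in\mathbb{P}_G$; the hard-thresholded version of $A$ at level $\eta$ equals $A_H$, where $H=(V,E')$ retains exactly the edges $(i,j)\in E$ with $|a_{ij}|>\eta$, and since $A\in\mathbb{P}_G$ every edge of $H$ is already an edge of $G$, so $H$ is a subgraph of $G$. As $G$, being a tree, is in particular a union of trees, Corollary~\ref{Cor_Hard_Thres_General} gives $A_H>0$; since $\eta$ was arbitrary, both $(1)$ and the ``Moreover'' assertion follow.

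For $(1)\Rightarrow(2)$ I would prove the contrapositive: if $G$ is not a tree then it contains a cycle $C_n$ ($n\geq3$) on vertices $v_1,\dots,v_n$, with cycle edges $(v_1,v_2),\dots,(v_{n-1},v_n),(v_n,v_1)\in E$, and I claim that for \emph{every} $\eta>0$ there is an $A\in\mathbb{P}_G$ whose hard-thresholded version at level $\eta$ is not positive definite. The raw material is the matrix $M_n=M_n(a,b)$ from the proof of Theorem~\ref{Th_Hard_Thres_General} with $\alpha=\beta=2$: it is positive definite, its off-diagonal cycle entries are the $1$'s along the path together with the single entries $b$ and $a$, and $\det M_n(0,b)=2n-(n-1)b^2<0$, so deleting the $a$-edge destroys positive definiteness. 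The one extra observation needed is that $M_n$ can be chosen with $|a|<1<|b|$: the bound $|b|=\sqrt{2n/(n-1)+\epsilon}>1$ is automatic, and since, as $\epsilon\to0^+$, one root of the concave quadratic $a\mapsto\det M_n$ tends to $0$, the admissible interval for $a$ contains values of arbitrarily small modulus.

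Given $\eta>0$, I would then pick $t$ with $\eta<t\leq\eta/|a|$ (possible because $|a|<1$) and build $A$ by placing $tM_n$ on the principal block indexed by $\{v_1,\dots,v_n\}$ (the path edge $(i,i+1)$ of $M_n$ aligned with $(v_i,v_{i+1})$, the $b$-edge with $(v_{n-1},v_n)$, the $a$-edge with $(v_n,v_1)$), setting the remaining diagonal entries to a positive constant $\gamma$ and all other entries to $0$. Then $A$ is block diagonal with positive definite blocks $tM_n$ and $\gamma I$, so $A\in\mathbb{P}^+$, and its only off-diagonal nonzeros lie on edges of $C_n\subseteq G$, so $A\in\mathbb{P}_G$. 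By the choice of $t$, the off-diagonal entries of $A$ of modulus $\leq\eta$ are precisely the $(v_1,v_n)$ and $(v_n,v_1)$ entries (equal to $\pm t|a|\leq\eta$, whereas all other off-diagonal entries are $\pm t$ or $\pm t|b|$, both $>\eta$), and diagonal entries are never thresholded; hence the hard-thresholded version of $A$ at level $\eta$ is $\mathrm{diag}(tM_n(0,b),\gamma I)$, whose first block is not positive definite. This contradicts $(1)$, completing the argument.

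The linear-algebra core --- that a suitable matrix supported on a cycle loses positive definiteness when one cycle edge is cut --- is already supplied by Theorem~\ref{Th_Hard_Thres_General}, so the only real obstacle is the difference between that setting and the present one: thresholding at level $\eta$ deletes \emph{every} entry of modulus $\leq\eta$, so we do not get to choose which edge of the cycle is cut. The resolution is exactly the inequality $|a|<1<|b|$ noted above: it makes the problematic entry strictly smaller in modulus than every other cycle entry, after which a single rescaling factor $t$ can push $ta$ below $\eta$ while leaving the rest of the cycle above $\eta$, which is precisely what is needed to isolate the edge that breaks positive definiteness.
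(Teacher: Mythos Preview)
Your proposal is correct and follows essentially the same route as the paper: both directions invoke Corollary~\ref{Cor_Hard_Thres_General} and the cycle matrix $M_n$ from Theorem~\ref{Th_Hard_Thres_General}, establish that one may take $|a|<1<|b|$, and then rescale to hit an arbitrary level $\eta$. The only cosmetic differences are that the paper pins down $|a|<1$ by locating the vertex $a^*=(-1)^{n+1}b/(n-1)$ of the parabola, whereas you argue via a root of $p_b$ tending to $0$, and that you spell out the block-diagonal embedding $\mathrm{diag}(tM_n,\gamma I)$ into $\mathbb{P}_G$ which the paper leaves implicit.
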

\begin{proof}
(2) $\Rightarrow$ (1) If $G$ is a tree, we already know from Corollary \ref{Cor_Hard_Thres_General} that every $A \in \mathbb{P}_G$ will retain positive definiteness when thresholded with respect to any subgraph. Therefore, the thresholded matrix will always be positive definite regardless of the value of $\eta$. \ \\
(1) $\Rightarrow$ (2) We will prove the contrapositive form. Suppose $G$ is not a tree. Then $G$ contains a cycle of length $n \geq 3$. To prove the result, we need to show that for any level $\eta > 0$, there exists a matrix $A > 0$ with zeros according to $G$ such that the thresholded version of $A$ at level $\eta$ is not positive definite. It is therefore sufficient to provide a matrix $A \in \mathbb{P}_{C_n}$, where $C_n$ is the cycle graph with $n$ vertices, which loses positive definiteness when hard-thresholded at any level $\eta > 0$. 

Let $M_n$ be the matrix considered in Theorem \ref{Th_Hard_Thres_General}
\[
M_n = \left(\begin{array}{cccccccc}
2 & 1 & 0 & 0 & \dots & 0 & 0 & \mc{a}\\
1 & 2 & 1 & 0 & \dots & 0 & 0 & \mc{0}\\
0 & 1 & 2 & 1 & \dots & 0 & 0 & \mc{0}\\
\vdots & \vdots & \vdots & \vdots & \ddots & 2 & 1 & \mc{0}  \\
0 & 0 & 0 & 0 & 0 & 1 & 2 & \mc{b} \\
\cline{1-8}
a & 0 & 0 & 0 & \dots & 0 & b & \mc{2}
\end{array}\right).
\]
We know from the proof of Theorem \ref{Th_Hard_Thres_General} that we can choose $a$ and $b$ such that $M_n > 0$, and $M_n$ loses positive definiteness when $a$ is set to zero. We will now prove that in addition, we can always choose $b > 1$ and $|a| < 1$ such that $M_n > 0$ and loses positive definiteness when $a$ is set to $0$. This will allow us to obtain the thresholded matrix by thresholding at level $\epsilon_0 $ for some $|a| \leq \epsilon_0 < 1$. The general result will follow by rescaling the matrix. 

Following Theorem \ref{Th_Hard_Thres_General}, we know that if we want $M_n$ to lose positive definiteness when $a$ is set to zero, we must choose 
\begin{equation}\label{eq:b_square}
b^2 = \frac{2n}{n-1} + \epsilon
\end{equation}
 for some $\epsilon > 0$ small enough. Therefore, we can choose $b > 1$. Also, in order for $M_n$ to be positive definite, we must choose $a$ between the two roots of the polynomial $p_b(a) := p(a,b)$ when $b^2$ takes the value prescribed in (\ref{eq:b_square}) (see the proof of Theorem \ref{Th_Hard_Thres_General}).
But notice that the maximum of $p_b$ is obtained when $a = a^*$ where 
\[
a^* = (-1)^{n+1}\frac{b}{n-1}. 
\] 
Therefore, if $\epsilon$ is small enough, we have $|a^*| < 1$ and $p_b(a^*) > 0$. With those choices of $a = a^*$ and $b$, the matrix $M_n$ is positive definite and loses positive definiteness if hard-thresholded at level $|a|$. The general case, for a given level $\eta > 0$, is obtained by considering the matrix $(\eta/|a|) M_n$.  
\end{proof}

{\it Remark: } Theorem \ref{Th_hard_thresholding_level_ep} proves that if the initial matrix $A > 0$ has zeros according to a tree, and as the thresholding parameter $\eta$ is continuously increased from $0$ to $1$, the (piecewise continuous) path of thresholded matrices remain within the cone $\mathbb{P}^+$. 

Following Theorem \ref{Th_Principal}, it is natural to seek a characterization of the maximal set of matrices which will retain positive definiteness after thresholding with respect to a graph $G$. Let $\mathcal{M}(G)$ denote this set: 
\[
\mathcal{M}(G) = \{M \in \mathbb{P}_n^+ : M_G \in \mathbb{P}_n^+\}. 
\]
According to Theorem \ref{Th_Principal}, $\mathcal{M}(G) = \mathbb{P}_n^+$, i.e., every positive definite matrix retains positive definiteness when thresholded, if and only if $G$ can be written as a union of complete disconnected components. But as discussed in Section \ref{Subsec_Linalg}, $\mathcal{M}(G)$ always contains the set of diagonally dominant matrices, irrespective of the graph $G$. The following result shows that the set of diagonally dominant matrices is never maximal in the sense that, given any graph $G$, we can always find non diagonally dominant positive definite matrices which retain positive definiteness when thresholded with respect to $G$. We first prove a simple lemma required in the subsequent result. 

\begin{lemma}\label{lemma_cutpoints}Let $G = (V,E)$ be a connected graph. Then there exists a vertex $v \in V$ such that the subgraph of $G$ induced by $V \backslash \{v\}$ is connected. 
\end{lemma}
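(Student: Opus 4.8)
The plan is to prove the statement using the notion of a spanning tree. Since $G = (V,E)$ is connected, it contains a spanning tree $T = (V, E_T)$. Any tree on at least two vertices has a leaf, i.e.\ a vertex of degree $1$; let $v$ be such a leaf of $T$. I claim this $v$ works for $G$ as well. The key observation is that removing a leaf from a tree leaves a tree (in particular a connected graph): if $T$ has vertex set $V$ with $|V| \geq 2$, then $T$ restricted to $V \setminus \{v\}$ is connected, since any path in $T$ between two vertices of $V \setminus \{v\}$ cannot pass through the degree-$1$ vertex $v$ as an interior vertex.

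From here the argument is quick. Let $a, b \in V \setminus \{v\}$ be arbitrary distinct vertices. Since $T$ restricted to $V \setminus \{v\}$ is connected, there is a path from $a$ to $b$ using only vertices in $V \setminus \{v\}$ and edges in $E_T \subseteq E$. This same sequence of vertices and edges is a path in $G$ restricted to $V \setminus \{v\}$. Hence the subgraph of $G$ induced by $V \setminus \{v\}$ is connected, which is exactly the claim.

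One detail to handle is the degenerate case $|V| = 1$: then $V \setminus \{v\} = \emptyset$ and the statement is vacuous or can be excluded by convention; I would simply assume $|V| \geq 2$ (or note the empty graph is trivially connected), since the lemma is only invoked for graphs with enough vertices to contain cycles in the applications. I should also recall why every tree on $n \geq 2$ vertices has a leaf: a tree has $n-1$ edges, so the sum of degrees is $2(n-1) < 2n$, forcing some vertex to have degree less than $2$; since a tree on $\geq 2$ vertices is connected it has no isolated vertex, so that vertex has degree exactly $1$.

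I do not anticipate a genuine obstacle here; the only thing to be careful about is making the "removing a leaf preserves connectedness" step rigorous rather than asserting it, and being explicit that the edges of the spanning tree $T$ are a subset of the edges of $G$, so that paths in $T$ transfer to paths in $G$. An alternative route would be to invoke the standard fact that a connected graph always has a non-cut vertex (a vertex whose removal does not disconnect the graph) — e.g.\ an endpoint of a longest path, or a leaf in a DFS tree — but the spanning-tree-plus-leaf argument is the cleanest and most self-contained.
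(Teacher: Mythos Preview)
Your argument is correct. The spanning-tree-plus-leaf approach is sound, and you have been careful about the two points that matter: why a leaf exists, and why removing a leaf from a tree leaves the remaining vertices connected.

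The paper takes a different route. It fixes an arbitrary vertex $u$, chooses $v$ to be a vertex at maximum distance from $u$ in $G$, and argues by contradiction: if removing $v$ disconnected some vertex $w$ from $u$, then every $u$--$w$ path in $G$ would pass through $v$, forcing $d(u,w) > d(u,v)$ and contradicting the maximality of $d(u,v)$. This is essentially the ``farthest vertex is not a cut vertex'' argument you allude to at the end of your write-up. Compared to your proof, the paper's approach avoids invoking the existence of a spanning tree and works directly with distances in $G$; your approach, on the other hand, is perhaps more structurally transparent and makes it immediately clear that in fact at least two such vertices exist (any tree on $n\geq 2$ vertices has at least two leaves). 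Both are short and elementary; neither has a real advantage for the purpose at hand.
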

\begin{proof}
Let $u \in V$ be any vertex of $G$ and let $v \in V$ be a vertex at the maximum distance possible from $u$, i.e., every path from $u$ to a vertex $w \in V \backslash \{u,v\}$ contains less than or the same number of edges as the shortest path connecting $u$ to $v$. We claim that the subgraph $G^*$ induced by $V \backslash \{v\}$ is connected. Suppose to the contrary that this is not the case. Let $w \in V$ be a vertex that is not connected to $u$ in $G^*$. This means that every path from $u$ to $w$ in $G$ passes through $v$. As a consequence, the distance between $u$ and $w$ in $G$ is strictly greater that the distance  between $u$ and $v$. This contradicts the maximality of the distance between $u$ and $v$. The graph $G^*$ must therefore be connected. 
\end{proof}

\begin{proposition}\label{prop_dd}
Let $G = (V,E)$ be any undirected, connected graph with at least 3 vertices. Then there exists a matrix $A = (a_{ij})$ with the following properties: 
\begin{enumerate}
\item $A$ is positive definite; 
\item $A$ has no zeros;
\item For every $i$, 
\[
|a_{ii}| < \sum_{j \not= i} |a_{ij}|,  
\]  
i.e. $A$ is not diagonally dominant within any row;
\item $A_G$ is positive definite; 
\item $A_G$ is not diagonally-dominant. 
\end{enumerate}
\end{proposition}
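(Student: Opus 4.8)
The plan is to exhibit such a matrix explicitly, splitting into two cases according to the spectral geometry of $G$.

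First I would try the simplest candidate: $A = J_n + \delta I$, where $J_n = \mathbf{1}\mathbf{1}^t$ is the all-ones matrix and $\delta > 0$. This $A$ is positive definite (eigenvalues $\delta$ and $n+\delta$), has no zeros, and satisfies property (3) precisely when $1 + \delta < n-1$, i.e.\ $\delta < n-2$. Its thresholded version is $A_G = (1+\delta) I + \mathrm{Adj}(G)$, which is positive definite iff $\delta > -1 - \lambda_{\min}(\mathrm{Adj}(G))$, and (evaluating at a vertex of maximal degree) fails strict row dominance iff $1+\delta \le d_{\max}(G)$. So the whole thing works as soon as the interval $\big(\max(0,\,-1-\lambda_{\min}(\mathrm{Adj}(G))),\ \min(n-2,\ d_{\max}(G)-1)\big)$ is nonempty. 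I would then invoke the standard spectral facts that, for a connected graph $G$, one has $\lambda_{\max}(\mathrm{Adj}(G)) = d_{\max}(G)$ iff $G$ is $d_{\max}$-regular, $\lambda_{\min}(\mathrm{Adj}(G)) = -\lambda_{\max}(\mathrm{Adj}(G))$ iff $G$ is bipartite, and (for $n\ge 3$) $\lambda_{\max}(\mathrm{Adj}(G)) = n-1$ only for $G = K_n$, where $\lambda_{\min}(\mathrm{Adj}(K_n)) = -1$. Together these give $\lambda_{\min}(\mathrm{Adj}(G)) > -d_{\max}(G)$ whenever $G$ is not regular bipartite, and $\lambda_{\min}(\mathrm{Adj}(G)) > -(n-1)$ always; a short check then shows the displayed interval is nonempty, so this construction settles every case \emph{except} that $G$ is $d_{\max}(G)$-regular and bipartite.

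For that remaining case --- $G$ connected, $r$-regular and bipartite, hence $G\ne K_n$ so $r\le n-2$, and $G$ not a tree so $r\ge 2$ --- I would instead use a signed adjacency matrix. Fix one cycle of $G$ and a signing $\tau\colon E\to\{\pm1\}$ equal to $+1$ on every edge except a single edge of that cycle; let $B$ be the matrix with $B_{ij} = \tau_{ij}$ for $(i,j)\in E$ and $0$ off the support of $G$. The elementary identity $x^t(rI + B)x = \sum_{(i,j)\in E}(x_i + \tau_{ij}x_j)^2$ shows $rI + B\succeq 0$, and a kernel vector would have to satisfy $x_i = -\tau_{ij}x_j$ along every edge, a system which (for a bipartite graph) is consistent only when $\tau$ is balanced; since $\tau$ has an unbalanced cycle, $rI + B\succ 0$, i.e.\ $\mu := -\lambda_{\min}(B) < r$. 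Now put $A := rI + B + c'\,C$, where $C$ is any $\pm1$ signed adjacency matrix of the complement $\bar G$ and $0 < c' < (r-\mu)/(n-1-r)$ (legitimate since $n-1-r\ge 1$). Then $A_G = rI + B$ is positive definite and, each of its rows having diagonal $r$ equal to its off-diagonal $\ell^1$-norm $r$, is not diagonally dominant; while $A$ is positive definite ($\lambda_{\min}(A)\ge (r-\mu) - c'(n-1-r) > 0$), has no zeros (every off-diagonal entry is $\pm1$ or $\pm c'$, every diagonal entry is $r$), and in every row has diagonal $r$ strictly below its off-diagonal $\ell^1$-norm $r + c'(n-1-r)$ --- i.e.\ property (3) holds.

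The main obstacle is precisely the regular-bipartite case: there the rank-one-plus-diagonal candidate genuinely breaks down, since $\lambda_{\min}(\mathrm{Adj}(G)) = -d_{\max}(G)$ leaves no room between ``$A_G$ positive definite'' and ``$A_G$ not diagonally dominant.'' Overcoming it hinges on the right algebraic device --- putting a suitable \emph{unbalanced} sign pattern on the edges of $G$, which strictly raises $\lambda_{\min}$ of the adjacency matrix while leaving all row $\ell^1$-norms untouched --- together with the bookkeeping that the complement is nonempty (so $A$ can be dense) and that the small perturbation $c'C$ preserves both positive definiteness and strict row-domination of the off-diagonal. As a sanity check I would confirm that the small instances ($n=3$), stars, and complete graphs all fall under the first case, as they must, none of them being regular bipartite for $n\ge 3$.
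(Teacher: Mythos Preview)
Your argument is correct and takes a genuinely different route from the paper's.

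The paper proceeds by induction on $|V|$: it checks a single explicit $3\times 3$ matrix for the base case, then borders a scaled copy $\lambda A_{n-1}$ of the inductively constructed matrix with a carefully chosen new row/column, using Lemma~\ref{lemma_cutpoints} to peel off a vertex while keeping the graph connected and a Schur-complement/determinant argument to pick $\lambda$ large enough for positive definiteness of both $B$ and $B_G$. No spectral graph theory enters.

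Your approach, by contrast, is fully explicit and global: the rank-one-plus-identity ansatz $A = J_n + \delta I$ handles every connected $G$ that is not regular bipartite, by the standard spectral inequalities $\lambda_{\min}(\mathrm{Adj}\,G) \ge -\lambda_{\max}(\mathrm{Adj}\,G) \ge -d_{\max}(G)$ together with the characterizations of equality; and for the residual regular-bipartite case you exploit an unbalanced edge-signing to push $\lambda_{\min}$ strictly above $-r$ while keeping all row $\ell^1$-norms equal to $r$, then perturb on the (nonempty) complement. Each step checks out, including the kernel analysis of $rI+B$ via the quadratic-form identity and the observation that $r\ge 2$ and $n-1-r\ge 1$ in that case.

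What each buys: the paper's induction is entirely elementary---no eigenvalue bounds, no case split---and adapts easily to variants of the statement, but the matrix it produces is only implicitly described. Your construction is more transparent (one can write the matrix down directly from $G$) and arguably more informative, at the cost of importing spectral facts and having to treat regular bipartite graphs separately. One small expository point: your interval should really be half-open on the right (property~(5) only needs $1+\delta \le d_{\max}$), but since you show the \emph{open} interval is nonempty in Case~1 this does not affect correctness.
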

\begin{proof}
We will prove the result by induction on $|V|$. Assume first that $|V|=3$. It is easily verified that for any graph $G$, the following matrix satisfies the theorem: 
\[
A = \left(\begin{array}{ccc}
3 & -2 & -2 \\
-2 & 3 & 2 \\
-2 & 2 & 3
\end{array}\right). 
\]
Now assume by the induction hypothesis that the theorem is true for $|V|=n-1$. Let $G$ be a connected graph with $n$ vertices. By Lemma \ref{lemma_cutpoints}, there is a vertex $v \in V$ such that the subgraph $G^*$ of $G$ induced by $V \backslash \{v\}$ is connected. By the induction hypothesis applied to $G^*$, there exists a matrix $A_{n-1} \in M_{n-1}(\mathbb{R})$ satisfying properties (1) to (5) with respect to $G^*$. Now consider the matrix  
\[
B = \left(\begin{array}{cccc}
 & & & \mc{x_1} \\
 & \lambda A_{n-1} & & \mc{x_2} \\
 & & & \mc{\vdots} \\
\cline{1-3}
x_1 & x_2 & \hdots  & x_n
\end{array}\right).
\]
We shall demonstrate that for appropriate values of $x_1, \dots, x_n$ and $\lambda$, $B$ will satisfy all the conditions of the theorem.

One can first easily  choose $x_1, \dots, x_n$ such that $B$ satisfies (2), (3) and (5). Indeed, let $x_1, \dots, x_n$ be chosen such that 
\begin{enumerate}
\item $x_i > 0$ for every $i=1, \dots, n$; 
\item $x_1 > x_n$; 
\end{enumerate} 
Condition (2) is verified since $x_i > 0$ for every $i$. Now, by the induction hypothesis, the matrix $B$ is not diagonally dominant within the first $n-1$ rows. Also, since $|x_1| > |x_n|$, the matrix is not diagonally within the last row and so condition (3) holds. Finally, $B_G$ is not diagonally dominant since, by the induction hypothesis, $(A_{n-1})_G$ is not diagonally dominant. Therefore conditions (2), (3) and (5) hold for any such choice of $x_1, \dots, x_n$ and for any $\lambda > 0$. 

We now proceed to show that we can adjust $\lambda$ in order to get (1) and (4) too.  Notice that, for $B$ and $B_G$ to be positive definite, we only need to ensure that $\det B > 0$ and $\det B_G > 0$. Let $x = (x_1, \dots x_{n-1})^t$ be the vector containing the first $n-1$ elements of the last column of $B$. Then 
\begin{eqnarray*}
\det B &=& \det (\lambda A_{n-1}) \left(x_n- \frac{1}{\lambda} x^t A_{n-1}^{-1} x\right), 
\end{eqnarray*}
which can be made to be positive for $\lambda \geq \lambda_1 > 0$, say. The same is true for $B_G$, but with $A_{n-1}$ and $x$ thresholded. Therefore, for some $\lambda_2 > 0$, $\det B_G > 0$ for $\lambda \geq \lambda_2$. The result follows by taking $\lambda = \max(\lambda_1, \lambda_2)$ in the construction of $B$. 
\end{proof}

%%%%%%%%%%%%%%%%%%%%%%%%%%%%%%%%%%%%
\section{Thresholding by a chordal/decomposable graph}\label{Section_Decomp}

In this section, we will characterize the set $\mathcal{M}(G)$ for certain classes of graphs.  Recall that the proofs in the last sections hinges on constructing cycles of length $n \geq 3$. Hence a natural step in characterizing matrices which retain positive definiteness when thresholded with respect to a graph $G$ is to consider graphs without induced cycles of length $n \geq 3$ or, more generally, graphs without induced cycles of length $n \geq 4$. Those graphs correspond to trees and chordal/decomposable graphs respectively. Characterizing $\mathcal{M}(G)$ when $G$ is chordal/decomposable is the topic of study in this section. We will see that, even in simple cases, a complete characterization may involve complex algebraic relations thus giving little hope of obtaining a general simple characterization of $\mathcal{M}(G)$. 

\begin{proposition}\label{prop_decomp}
Let $(A,B,C)$ be a decomposition of a graph $G = (V,E)$, i.e., 
\begin{enumerate}
\item $V = A \cup B \cup C$; 
\item $C$ separates $A$ from $B$; 
\item $C$ is complete.
\end{enumerate}
Let $N$ be a positive definite matrix . Then $M = N_G$ is positive definite iff 
\begin{enumerate}
\item $
M_{A \cup C}  = \left(\begin{array}{cc} M_{AA} & M_{AC} \\ M_{CA} & M_{CC}\end{array}\right) > 0;
$
\item $
M_{B \cup C} = \left(\begin{array}{cc}M_{CC} & M_{CB} \\ M_{BC} & M_{BB}\end{array}\right) > 0;
$
\item $S_1 + S_2 - M_{CC} > 0$, where $S_1 = M_{CC} - M_{CA}M_{AA}^{-1} M_{AC}$ and  $S_2 = M_{CC} - M_{CB}M_{BB}^{-1}M_{BC}$ are Schur complements of $M_{AA}$ in $M_{A \cup C}$ and $M_{BB}$ in $M_{B \cup C}$ respectively. 
\end{enumerate}
Moreover, conditions (1) and (2) can be replaced by $(1')$ and $(2')$ where 
\begin{description}
 \item[$1'.$] $M_{AA} > 0$ 
\item[$2'.$] $M_{BB} > 0$. 
\end{description}
\end{proposition}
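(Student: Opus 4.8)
The plan is to use the separation hypothesis to put $M$ in a convenient block form and then apply the Schur complement criterion recalled in Section \ref{Subsec_Linalg}. The key preliminary observation is that since $C$ separates $A$ from $B$ in $G$, no edge of $G$ joins a vertex of $A$ to a vertex of $B$ --- a direct edge would be a path from $A$ to $B$ avoiding $C$ --- and therefore the thresholded matrix $M = N_G$ satisfies $M_{AB} = 0$. This is the only structural feature of $M$ that the argument uses; in particular neither the positive definiteness of $N$ nor the completeness of $C$ enters. Ordering the vertices of $V$ as $A, B, C$, we may thus write
\[
M = \begin{pmatrix} M_{AA} & 0 & M_{AC} \\ 0 & M_{BB} & M_{BC} \\ M_{CA} & M_{CB} & M_{CC}\end{pmatrix},
\]
whose leading principal block (indexed by $A \cup B$) is the block-diagonal matrix $\operatorname{diag}(M_{AA}, M_{BB})$.

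Next I would apply the Schur complement fact: $M > 0$ if and only if $\operatorname{diag}(M_{AA}, M_{BB}) > 0$ and the Schur complement of this block in $M$ is positive definite. The first part is equivalent to $M_{AA} > 0$ and $M_{BB} > 0$, i.e.\ to $(1')$ and $(2')$. A direct computation identifies the Schur complement as $M_{CC} - M_{CA}M_{AA}^{-1}M_{AC} - M_{CB}M_{BB}^{-1}M_{BC}$, and the elementary identity
\[
S_1 + S_2 - M_{CC} = \big(M_{CC} - M_{CA}M_{AA}^{-1}M_{AC}\big) + \big(M_{CC} - M_{CB}M_{BB}^{-1}M_{BC}\big) - M_{CC}
\]
shows this Schur complement is exactly $S_1 + S_2 - M_{CC}$. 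Hence $M > 0 \iff (1') \wedge (2') \wedge (3)$.

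Finally I would deduce the equivalence with $(1),(2),(3)$ from the fact that principal submatrices of positive definite matrices are positive definite. If $M > 0$ then $M_{A\cup C}$ and $M_{B\cup C}$ are positive definite, giving $(1)$ and $(2)$, while $(3)$ holds by the previous paragraph; conversely $(1)$ forces $M_{AA} > 0$ and $(2)$ forces $M_{BB} > 0$, so $(1),(2),(3)$ imply $(1'),(2'),(3)$ and therefore $M > 0$. I do not expect a genuine obstacle: the whole argument is bookkeeping with Schur complements. The only points needing a little care are the translation of the graph-separation hypothesis into the vanishing of the block $M_{AB}$, and noting that $S_1, S_2$ are well defined exactly when $M_{AA}, M_{BB}$ are invertible, which holds in every case under consideration (under $(1),(2)$ or $(1'),(2')$ directly, and implicitly whenever $(3)$ is even written down).
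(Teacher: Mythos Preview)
Your argument is correct and follows essentially the same Schur-complement strategy as the paper. The only cosmetic difference is the block ordering: the paper orders the vertices as $A,C,B$ and takes two nested Schur complements (first of $M_{BB}$, then of $M_{AA}$), whereas you order them as $A,B,C$ and exploit the block-diagonal structure of $\operatorname{diag}(M_{AA},M_{BB})$ to take a single Schur complement---this is exactly the variant the paper mentions in its post-proof remark.
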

\begin{proof}
 Let us factor $M$ according to the decomposition: 
\[
M = \left(
\begin{array}{ccc}
M_{AA} & M_{AC} & 0 \\
M_{CA} & M_{CC} & M_{CB} \\
0 & M_{BC} & M_{BB}
\end{array}
\right)
\]
We know that $M$ is positive definite iff 
\[
M_{BB} > 0
\]
and 
\[
M_{A \cup C} - \left(\begin{array}{c}0 \\ M_{CB}\end{array}\right) M_{BB}^{-1} \left(\begin{array}{cc} 0 & M_{BC}\end{array}\right)  > 0.
\]
Let us develop the last term. 
\begin{eqnarray*}
M_{A \cup C} - \left(\begin{array}{c}0 \\ M_{CB}\end{array}\right) M_{BB}^{-1} \left(\begin{array}{cc} 0 & M_{BC}\end{array}\right) &=&
\left(\begin{array}{cc}
M_{AA} & M_{AC}  \\
M_{CA} & M_{CC}
\end{array}\right) - \left(\begin{array}{cc}
0 & 0 \\
0 & M_{CB}M_{BB}^{-1}M_{BC}
\end{array}\right) \\
&=& \left(\begin{array}{cc}
M_{AA} & M_{AC}  \\
M_{CA} & M_{CC} - M_{CB}M_{BB}^{-1}M_{BC}
\end{array}\right) 
\end{eqnarray*}
But the last matrix is positive definite iff $M_{AA} > 0$ and the Schur complement of $M_{AA}$ in this matrix given by 
\[
M_{CC} - M_{CB}M_{BB}^{-1}M_{BC} - M_{CA}M_{AA}^{-1} M_{AC} = S_1 + S_2 - M_{CC} > 0
\]
where $S_1 = M_{CC} - M_{CA}M_{AA}^{-1} M_{AC}$ and  $S_2 = M_{CC} - M_{CB}M_{BB}^{-1}M_{BC}$ are Schur complements of $M_{AA}$ in $M_{A \cup C}$ and $M_{BB}$ in $M_{B \cup C}$ respectively. In summary, $M$ is positive definite iff  $M_{AA} > 0$, $M_{BB} > 0$ and $S_1 + S_2 - M_{CC} > 0$. To conclude the proof, we only need to prove that $M_{AA} > 0$ and $M_{BB} > 0$ can be replaced by $M_{A \cup C} > 0$ and $M_{B \cup C} > 0$ respectively. 

Note that if $M > 0$, then $M_{A \cup C} > 0$ and $M_{B \cup C} > 0$ since they are principal submatrices of $M$. Therefore, $M > 0$ implies the three conditions of the theorem. Conversely, if the three conditions of the theorem are satisfied, then in particular, $M_{AA} > 0$ and $M_{BB} > 0$ as they are principal submatrices of $M_{A \cup C}$ and $M_{B \cup C}$. From the argument above, this together with the condition of the theorem $S_1 + S_2 - M_{CC} > 0$ implies that $M > 0$. 
\end{proof}
{\it Remark:} We note that another way to prove Proposition \ref{prop_decomp} is to factor $M$ using a Cholesky type decomposition: 
\[
M = \left(
\begin{array}{ccc}
M_{AA} & 0 & M_{CA}^t \\
0 & M_{BB} & M_{CB}^t \\
M_{CA} & M_{CB} & M_{CC}
\end{array}
\right) = \left(
\begin{array}{ccc}
M_{AA} & 0 & 0 \\
0 & M_{BB} & 0 \\
M_{CA} & M_{CB} & I
\end{array}
\right) \left(
\begin{array}{ccc}
M_{AA}^{-1} & 0 & 0 \\
0 & M_{BB}^{ -1} & 0 \\
0 & 0 & S
\end{array}
\right) \left(
\begin{array}{ccc}
M_{AA} & 0 & 0 \\
0 & M_{BB} & 0 \\
M_{CA} & M_{CB} & I
\end{array}
\right)^t. 
\]
When the graph $G$ is chordal/decomposable, we can apply Proposition \ref{prop_decomp} recursively to characterize the class of the positive definite matrices that retain positive definiteness when thresholded. 

\begin{theorem}\label{thm_decomp}
Let $G$ be a connected chordal/decomposable graph and let $(C_1,\dots,C_k)$ be a perfect order of its cliques. Let $H_q = C_1 \cup \dots \cup C_q$ and $S_q = C_q \cap H_{q-1}$ ($q = 2, \dots, k$). Moreover let $A_q = H_{q-1} \backslash S_q$ and $B_q = C_q \backslash S_q$. Assume $N$ is a positive definite matrix. Then $M = N_G$ is positive definite if and only if for every $2 \leq q \leq k$, 
\[
S_1^{(q)} + S_2^{(q)} - M_{S_q S_q} > 0
\]
where $S_1^{(q)} = M_{S_qS_q} - M_{S_qA_q}M_{A_q A_q}^{-1}M_{A_qS_q}$ and $S_2^{(q)} = M_{S_qS_q} - M_{S_qB_q} M_{B_qB_q}^{-1}M_{B_qS_q}$ are the Schur complements of $M_{A_q A_q}$ in $M_{A_q \cup S_q, A_q \cup S_q}$ and of $M_{B_q B_q}$ in $M_{S_q \cup B_q, S_q \cup B_q}$ respectively. 
\end{theorem}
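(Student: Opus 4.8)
The plan is to prove Theorem \ref{thm_decomp} by induction on the number of cliques $k$, using Proposition \ref{prop_decomp} as the inductive step applied to a carefully chosen decomposition coming from the perfect ordering. For $k=1$ the graph $G$ is complete, so $M = N_G = N > 0$ and there is nothing to verify (the index set $2 \le q \le k$ is empty). For the inductive step, suppose the claim holds for all connected chordal graphs with $k-1$ cliques. Given $G$ with perfect ordering $(C_1, \dots, C_k)$, the triple $(A_k, B_k, S_k)$, where $A_k = H_{k-1} \backslash S_k$, $B_k = C_k \backslash S_k$, and $S_k = C_k \cap H_{k-1}$, is a decomposition of $G$: indeed $V = A_k \cup B_k \cup S_k$, the set $S_k$ is complete (it lies inside the clique $C_k$), and $S_k$ separates $A_k$ from $B_k$ because any path from $H_{k-1}$ to $B_k$ must enter $C_k$ through $S_k$ by the running-intersection property of a perfect ordering. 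The induced subgraph $G_{A_k \cup S_k}$ is precisely $H_{k-1}$, which is connected chordal with cliques $C_1, \dots, C_{k-1}$ forming a perfect ordering, while $G_{B_k \cup S_k} = C_k$ is complete.

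Next I would apply Proposition \ref{prop_decomp} with this decomposition: $M = N_G > 0$ if and only if $M_{A_k A_k} > 0$, $M_{B_k B_k} > 0$, and $S_1^{(k)} + S_2^{(k)} - M_{S_k S_k} > 0$. The third condition is exactly the $q = k$ instance of the stated criterion. The condition $M_{B_k B_k} > 0$ holds automatically: $B_k \cup S_k = C_k$ is a clique of $G$, so the corresponding block of $M = N_G$ equals the corresponding block of $N$, which is positive definite; in particular $M_{B_k B_k} > 0$. It remains to handle $M_{A_k A_k} > 0$. Here $A_k \cup S_k = H_{k-1}$, and the block $M_{H_{k-1}, H_{k-1}}$ is itself the thresholding of a positive definite matrix (namely the principal submatrix $N_{H_{k-1}, H_{k-1}}$, which is positive definite) with respect to the chordal graph $H_{k-1}$. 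So by the induction hypothesis, $M_{H_{k-1}, H_{k-1}} > 0$ if and only if the inequalities $S_1^{(q)} + S_2^{(q)} - M_{S_q S_q} > 0$ hold for $2 \le q \le k-1$ — and one checks that the sets $A_q, B_q, S_q$ defined relative to the perfect ordering of $H_{k-1}$ coincide with those defined relative to the perfect ordering of $G$ for $q \le k-1$, since $H_{k-1}$ and its history and separators are unchanged. Since $M_{A_k A_k} > 0$ is implied by $M_{H_{k-1},H_{k-1}} > 0$ (principal submatrix) and conversely the full list of conditions for $q \le k-1$ plus condition $q=k$ gives $M_{H_{k-1},H_{k-1}}>0$ and hence (with $M_{B_kB_k}>0$ automatic) gives $M > 0$ via Proposition \ref{prop_decomp}, the equivalence closes.

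A subtle point I would need to argue carefully is the direction $M_{A_k A_k} > 0 \Leftrightarrow M_{H_{k-1}, H_{k-1}} > 0$ in the context of Proposition \ref{prop_decomp}: strictly, Proposition \ref{prop_decomp} as stated uses $M_{AA} > 0$ and $M_{BB} > 0$ as the replaceable conditions $(1'),(2')$, but we actually want to feed in positivity of the \emph{larger} block $M_{A_k \cup S_k}$ so that the induction hypothesis (which characterizes when the thresholded $(A_k \cup S_k)$-block is positive definite) applies directly. Fortunately the proposition already records the equivalence of $(1)$ with $(1')$, i.e. $M_{A \cup C} > 0 \Leftrightarrow M_{AA} > 0$ \emph{given} the other conditions, so no new work is needed — I just invoke that equivalence. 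The main obstacle, then, is bookkeeping rather than a genuine mathematical difficulty: verifying that $(A_k, B_k, S_k)$ genuinely forms a decomposition (which rests on the running-intersection property of perfect orderings), and checking that the recursively-defined Schur complement quantities $S_1^{(q)}, S_2^{(q)}$ computed inside $H_{k-1}$ agree with those computed inside $G$ for $q \le k-1$. Both follow from the fact that $H_{k-1}$ is an induced subgraph and its clique structure, histories, and separators are inherited verbatim from the perfect ordering of $G$, so the corresponding blocks of $M$ are literally the same submatrices.
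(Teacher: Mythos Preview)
Your proposal is correct and follows essentially the same approach as the paper: both peel off the last clique $C_k$ via the decomposition $(A_k,B_k,S_k)$, apply Proposition \ref{prop_decomp}, note that the $C_k$-block is automatically positive definite, and then recurse on $H_{k-1}$. The paper phrases this as an explicit backward unwinding from $q=k$ down to $q=2$ (using conditions (1) and (2) of Proposition \ref{prop_decomp} directly, so that $M_{H_{q-1},H_{q-1}}>0$ appears at each step), whereas you package the same recursion as a formal induction on $k$ and route through conditions $(1')$, $(2')$ before invoking the equivalence with (1), (2); the substance is identical.
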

\begin{proof}
We will prove the theorem by applying  Proposition \ref{prop_decomp} recursively to different decompositions of $G$ that correspond to the perfect ordering $(C_1, \dots, C_k)$. More specifically, we shall work backwards starting from $C_k$ then to $C_{k-1}$, etc., eventually leading up to $C_1$. 

Consider first the decomposition $(B_k, A_k, S_k)$ of $G$  (see for example \cite{lauritzen} Lemma 2.11). By an application of Proposition  \ref{prop_decomp}, $M > 0$ if and only if 
\begin{enumerate}
\item $M_{C_k C_k} > 0$; 
\item $M_{H_{k-1} H_{k-1}} > 0$; 
\item $S_1^{(k)} + S_2^{(k)} - M_{S_k S_k} > 0$. 
\end{enumerate}
Notice that condition (1) is trivially verified since $N > 0$ and $C_k$ is complete. Consider now the subgraph $G_{k-1}$ induced by $H_{k-1}$. A decomposition of this subgraph is given by $(B_{k-1}, A_{k-1}, S_{k-1})$. By the same argument as above, the condition $M_{H_{k-1}, H_{k-1}} > 0$ is equivalent to 
\begin{enumerate}
\item $M_{C_{k-1} C_{k-1}} > 0$; 
\item $M_{H_{k-2} H_{k-2}} > 0$; 
\item $S_1^{(k-1)} + S_2^{(k-1)} - M_{S_{k-1} S_{k-1}} > 0$. 
\end{enumerate}
As a consequence, $M > 0$ if and only if $M_{H_{k-2} H_{k-2}} > 0$ and 
\[
S_1^{(q)} + S_2^{(q)} - M_{S_q S_q} > 0 \qquad (q = k-1, k). 
\]
By applying the same reasoning to the graphs $G_q$ induced by $H_{q}$ $(2 \leq q \leq k-2)$ and working backwards, we obtain that $M > 0$ if and only if $M_{H_1 H_1} > 0$ and 
\[
S_1^{(q)} + S_2^{(q)} - M_{S_q S_q} > 0 \qquad (2 \leq q \leq k).
\]
Since $H_1 = C_1$, the condition  $M_{H_1 H_1} > 0$ is trivially satisfied and so $M > 0$ if and only if 
\[
S_1^{(q)} + S_2^{(q)} - M_{S_q S_q} > 0 \qquad (2 \leq q \leq k). 
\]
\end{proof}

The next corollary shows that the conditions of the preceding theorem reduce to scalar conditions when the graph is a tree. 
\begin{corollary}\label{cor_tree_charac}
Let $T$ be a tree with root $v_0$. Let $e_i = (p_i,q_i)$, $i=1, \dots, k$, be the edges of $T$ labeled according to Figure \ref{fig:tree}, i.e., starting from left and proceeding to the right at every depth of the tree. Note that $p_i$ is the parent of $q_i$. Let $N > 0$ and define $M = N_G$. 
\begin{figure}[h]
\begin{center}
\includegraphics[width=8cm]{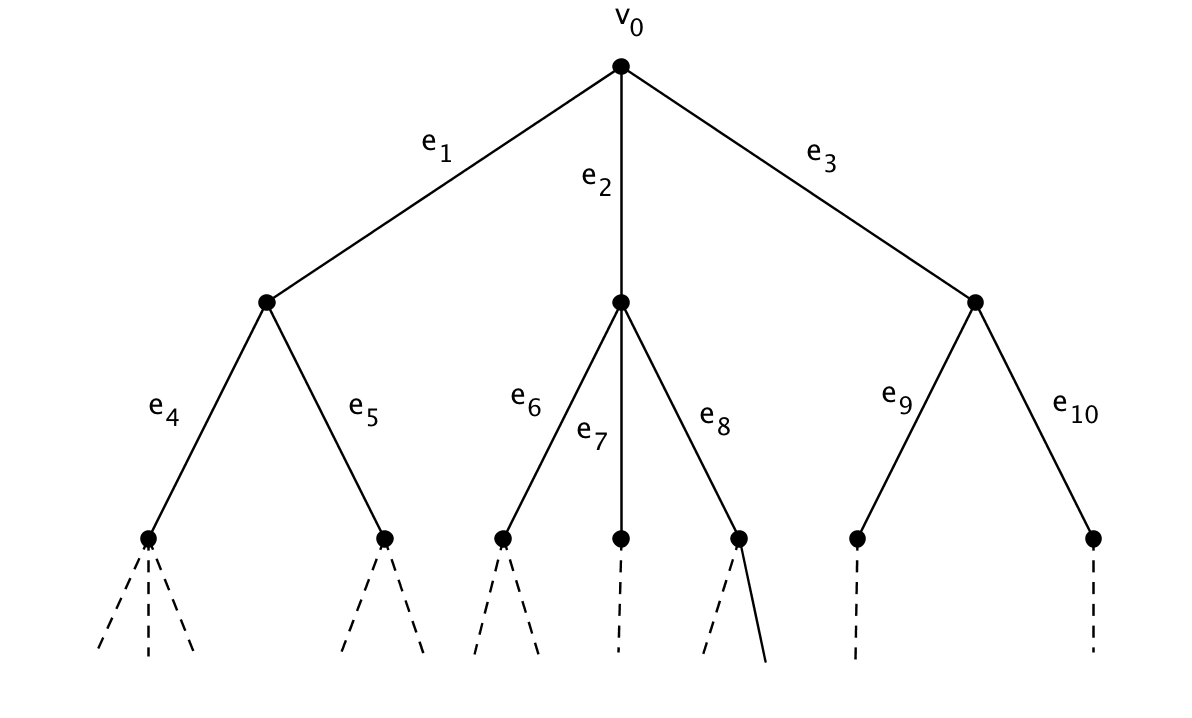}
\end{center}
\caption{Edge enumeration of a tree}
\label{fig:tree}
\end{figure}
For each $j=2, \dots, k$, define a scalar $\sigma_j$ as follows: 
\[
\sigma_j = M_{p_j,p_j} - M_{p_j,A_j} M_{A_j,A_j}^{-1} M_{A_j,p_j}
\]
where $A_j = \cup_{i=1}^{j-1} \{p_i, q_i\} \backslash \{p_j\}$. Also, let 
\[
\eta_j = M_{p_j,p_j} - M_{p_j,q_j}^2 M_{q_j,q_j}^{-1}. 
\]
Then $M > 0$ if and only if 
\[
\sigma_j + \eta_j - M_{p_j,p_j} > 0
\]
for every $j=2, \dots, k$. 
\end{corollary}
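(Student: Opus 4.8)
The plan is to derive Corollary \ref{cor_tree_charac} as a direct specialization of Theorem \ref{thm_decomp} to the case where $G = T$ is a tree. The key observation is that a tree is a connected chordal graph whose maximal cliques are exactly its edges, so the cliques $C_1, \dots, C_k$ in the statement of Theorem \ref{thm_decomp} can be taken to be the edge-sets $\{p_i, q_i\}$ enumerated as in Figure \ref{fig:tree}. First I would verify that the breadth-first left-to-right enumeration of the edges described in the corollary is in fact a perfect ordering of the cliques of $T$: since $p_j$ is the parent of $q_j$ and every parent vertex $p_j$ (for $j \geq 2$) is incident to an edge that appears earlier in the ordering (the edge connecting $p_j$ to its own parent, or, when $p_j = v_0$, an earlier sibling edge), the separator $S_j = C_j \cap H_{j-1}$ is nonempty and contained in an earlier clique. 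This is the graph-theoretic bookkeeping step and, while not deep, it is the step one must be careful with — one has to check that with this enumeration $S_j = \{p_j\}$ exactly, i.e. that $q_j$ never appears among the earlier edges (true because $q_j$ is a child encountered for the first time) and that $p_j$ does appear earlier (true because its connecting edge to its parent precedes it in the BFS order, and the root case is handled by sibling ordering).

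Once the perfect ordering is fixed with $C_j = \{p_j, q_j\}$ and $S_j = \{p_j\}$, the remaining identifications are purely notational. With the notation of Theorem \ref{thm_decomp}, $H_{j-1} = \bigcup_{i=1}^{j-1}\{p_i,q_i\}$, so $A_j = H_{j-1} \backslash S_j = \bigcup_{i=1}^{j-1}\{p_i,q_i\} \backslash \{p_j\}$, which is exactly the set $A_j$ defined in the corollary; and $B_j = C_j \backslash S_j = \{q_j\}$ is a single vertex. Substituting these into the Schur complements of Theorem \ref{thm_decomp}: $S_1^{(j)} = M_{S_jS_j} - M_{S_jA_j}M_{A_jA_j}^{-1}M_{A_jS_j}$ becomes the scalar $M_{p_j,p_j} - M_{p_j,A_j}M_{A_j,A_j}^{-1}M_{A_j,p_j} = \sigma_j$, and $S_2^{(j)} = M_{S_jS_j} - M_{S_jB_j}M_{B_jB_j}^{-1}M_{B_jS_j}$ becomes $M_{p_j,p_j} - M_{p_j,q_j}M_{q_j,q_j}^{-1}M_{q_j,p_j} = M_{p_j,p_j} - M_{p_j,q_j}^2 M_{q_j,q_j}^{-1} = \eta_j$, using symmetry of $M$ and the fact that $B_j$ is a singleton so these are genuine scalars and the inverse is just a reciprocal. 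The term $M_{S_jS_j}$ is the scalar $M_{p_j,p_j}$.

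Therefore the condition $S_1^{(j)} + S_2^{(j)} - M_{S_jS_j} > 0$ of Theorem \ref{thm_decomp} is literally the condition $\sigma_j + \eta_j - M_{p_j,p_j} > 0$, and applying Theorem \ref{thm_decomp} — whose hypotheses are met since $T$ is a connected chordal graph and $N > 0$ — gives that $M = N_G > 0$ if and only if this holds for every $j = 2, \dots, k$. The only genuine obstacle is the perfect-ordering verification in the first paragraph; everything after that is substitution and simplification. One should also remark that $M_{q_j,q_j} = N_{q_j,q_j} > 0$ so $\eta_j$ is well-defined, and that $M_{A_j,A_j} > 0$ (it is a principal submatrix of $M_{H_{j-1}H_{j-1}}$, which is positive definite by the inductive structure in the proof of Theorem \ref{thm_decomp}) so $\sigma_j$ is well-defined.
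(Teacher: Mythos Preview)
Your proposal is correct and follows exactly the approach of the paper, which simply states that the result follows from Theorem \ref{thm_decomp} by noting that the given enumeration of edges is a perfect ordering of the cliques of $T$. Your write-up is in fact more careful than the paper's one-line proof, since you verify explicitly that $S_j = \{p_j\}$ and that the resulting Schur complements are scalars matching $\sigma_j$ and $\eta_j$.
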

\begin{proof}
This follows from Theorem \ref{thm_decomp} by noting that the enumeration of the edges given above is in fact a perfect ordering of the cliques of $T$. 
\end{proof}
Theorem \ref{thm_decomp} and Corollary \ref{cor_tree_charac} demonstrate that for decomposable graphs, or even simpler graphs like trees, characterizing the class of matrices retaining positive definiteness can lead to complex algebraic conditions. For a narrow class of graphs however, we can give a more explicit characterization of the matrices retaining positive definiteness. This is the case when the graph is ``auto-similar'' like a path for example. We first prove a lemma that is a key ingredient for our next result. 

\begin{lemma}\label{lemma_schur_path}
Let $A_n$ be a general matrix over a path of length $n \geq 3$, given as follows 
\[
A_n = \left(\begin{array}{ccccccc}
\alpha_1 & \mc{a_1} & & & & &\\
\cline{1-6}
a_1 & \mc{\alpha_2} & a_2 & & & & \\
 & \mc{a_2} & \alpha_3 & a_3 & & &\\
& \mc{}& a_3 & \alpha_4 & a_4 & &\\
 & \mc{ }& & & \ddots & &\\
 & \mc{}& & a_{n-1}& \alpha_{n-1} & a_{n-1} \\
 & \mc{}& & & a_{n-1} & \alpha_n
\end{array}\right) 
\]
where the blank entries are zeros. Then the Schur complement of the lower right block in $A_n$, denoted $\sigma_n(1)$, is given by the following continued fraction: 
\[
\sigma_n(1) := \alpha_1 - \cfrac{a_1^2}{\alpha_2 - \cfrac{a_2^2}{\alpha_3 - \cfrac{a_3^2}{\alpha_4 - \cfrac{\dots}{\alpha_{n-1}- \cfrac{a_{n-1}^2}{\alpha_n}}}}}.
\]
\end{lemma}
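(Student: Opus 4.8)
The plan is to prove the formula by induction on $n$, reducing each step to a single application of the block-inversion formula; the one point needing care is how the tridiagonal (path) sparsity collapses the Schur-complement update. Set $B := A_n[\{2,\dots,n\},\{2,\dots,n\}]$, the lower right $(n-1)\times(n-1)$ block of $A_n$. Since $\sigma_n(1)$ is by definition the Schur complement of $B$ in $A_n$, and the first row of $A_n$ has exactly one off-diagonal entry — namely $a_1$, in column $2$ — the rank-$(n-1)$ update collapses to a single term:
\[
\sigma_n(1) \;=\; \alpha_1 \,-\, a_1^2\,(B^{-1})_{11},
\]
where $(B^{-1})_{11}$ denotes the top-left entry of $B^{-1}$. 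I would work throughout under the (implicit) nondegeneracy hypothesis that makes $\sigma_n(1)$ and all the nested Schur complements well-defined — equivalently, that every partial denominator of the continued fraction is nonzero; this is automatic when $A_n$ is positive definite, since then every principal submatrix is invertible.

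For the base case $n=3$ I would compute directly: the inverse of $B=\begin{pmatrix}\alpha_2 & a_2 \\ a_2 & \alpha_3\end{pmatrix}$ has top-left entry $(B^{-1})_{11}=\alpha_3/(\alpha_2\alpha_3-a_2^2)$, hence $\sigma_3(1)=\alpha_1 - \cfrac{a_1^2}{\alpha_2-\cfrac{a_2^2}{\alpha_3}}$, the claimed formula. For the inductive step, the key observation is that $B$ is \emph{itself} the tridiagonal matrix of a path, namely the path on vertices $2,\dots,n$ with diagonal entries $\alpha_2,\dots,\alpha_n$ and off-diagonal entries $a_2,\dots,a_{n-1}$. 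Writing $B=\begin{pmatrix}\alpha_2 & v^t \\ v & C\end{pmatrix}$ with $v=(a_2,0,\dots,0)^t$ and $C=B[\{3,\dots,n\},\{3,\dots,n\}]$, the block-inversion formula gives $(B^{-1})_{11}=(\alpha_2 - v^t C^{-1} v)^{-1}$; but $\alpha_2 - v^t C^{-1} v$ is precisely the Schur complement of the lower right block of $B$, i.e.\ it is $\sigma_{n-1}(1)$ for the path $B$. Therefore $\sigma_n(1) = \alpha_1 - a_1^2/\sigma_{n-1}(1)$, and expanding $\sigma_{n-1}(1)$ via the induction hypothesis as the continued fraction on $\alpha_2,\dots,\alpha_n$ and substituting yields exactly the asserted continued fraction for $\sigma_n(1)$.

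The genuinely load-bearing step, and the one I would be most careful about, is the collapse of the Schur-complement update: because $A_n$ is tridiagonal, vertex $1$ is adjacent only to vertex $2$, so the only nonzero component of the off-block column is $a_1$ and the update is $a_1^2(B^{-1})_{11}$ rather than a full quadratic form — and, just as importantly, the remaining block $B$ has the same path shape, so the induction actually closes. Everything else is the routine scalar-versus-block inverse identity $(B^{-1})_{11}=(\alpha_2 - v^tC^{-1}v)^{-1}$ together with index bookkeeping, and the nondegeneracy caveat can be dispatched in a sentence (or simply folded into the standing hypothesis that the continued fraction is defined).
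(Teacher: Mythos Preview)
Your proof is correct and follows essentially the same approach as the paper's: induction on $n$ with the $n=3$ base case computed directly, then the reduction $\sigma_n(1)=\alpha_1-a_1^2(B^{-1})_{11}$ via the tridiagonal sparsity, followed by the block-inverse identity $(B^{-1})_{11}=(\text{Schur complement of the lower block of }B)^{-1}$ and the induction hypothesis applied to $B$. The only addition you make is the explicit nondegeneracy caveat, which the paper leaves implicit.
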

\begin{proof}
We shall prove the result by induction. Assume first that $n=3$. Then 
\[
A_3 = \left(\begin{array}{ccc}
\alpha_1 & a_1 & 0 \\
a_1 & \alpha_2 & a_2 \\
0 & a_2 & \alpha_3
\end{array}\right)
\]
and the Schur complement of the lower right block in $A_3$ is given by 
\begin{eqnarray*}
\sigma_3(1) &=& \alpha_1 - (\begin{array}{cc}a_1 & 0\end{array}) \left(\begin{array}{cc}\alpha_2 & a_2 \\ a_2 & \alpha_3\end{array}\right)^{-1} \left(\begin{array}{c}a_1 \\ 0\end{array}\right) \\
&=& \alpha_1 - (\begin{array}{cc}a_1 & 0\end{array}) \frac{1}{\alpha_2\alpha_3 - a_2^2}\left(\begin{array}{cc}\alpha_3 & -a_2 \\ -a_2 & \alpha_2\end{array}\right) \left(\begin{array}{c}a_1 \\ 0\end{array}\right) \\
&=& \alpha_1 - \frac{a_1^2\alpha_3}{\alpha_2\alpha_3 - a_2^2} \\
&=& \alpha_1 - \cfrac{a_1^2}{\alpha_2 - \cfrac{a_2^2}{\alpha_3}}. 
\end{eqnarray*}
Now assume the expression is true for the $(n-1) \times (n-1)$ case and consider the $n \times n$ matrix as given in the statement of the lemma. The Schur complement of the lower right block in $A_n$ is given by 
\begin{eqnarray*}
\sigma_n(1) &=& \alpha_1 - (\begin{array}{cccc}a_1 & 0 & \dots & 0\end{array})\left(\begin{array}{ccccc}\alpha_2 & a_2 & & & \\
a_2 & \alpha_3 & a_3 & & \\
 & a_3 & \alpha_4 & a_4 & \\
 & & & \ddots & \\
 & & a_{n-1} & \alpha_{n-1} & a_n \\
 & & & a_{n-1} & \alpha_n\end{array}\right)^{-1}\left(\begin{array}{c}a_1 \\ 0 \\ \vdots \\ 0\end{array}\right) \\
&=& \alpha_1 - (\begin{array}{cccc}a_1 & 0 & \dots & 0\end{array}) \tilde{A}^{-1}\left(\begin{array}{c}a_1 \\ 0 \\ \vdots \\ 0\end{array}\right)
\end{eqnarray*}
where we have labeled $\tilde{A}$ the lower right block of $A_n$. Now, notice that the righthand term of the last expression above is equal to $a_1^2$ times the $(1,1)$ element of $\tilde{A}^{-1}$. Recall that the upper left block of the inverse of a block matrix $M$ denoted by 
\[
M = \left(\begin{array}{cc}A & B \\ C & D\end{array}\right)
\]
is given by the inverse of the Schur complement of $D$ in $M$. Therefore, if $S$ is the Schur complement of $\tilde{A}$ in $A_n$, then  
\[
\sigma_n(1) = \alpha_1 - a_1^2 S^{-1}. 
\]
But by the induction hypothesis, 
\[
S = \alpha_2 - \cfrac{a_2^2}{\alpha_3 - \cfrac{a_3^2}{\alpha_4 - \cfrac{\dots}{\alpha_{n-1}- \cfrac{a_n^2}{\alpha_n}}}}
\]
and so the result follows. 
\end{proof}

We now give algebraic conditions that characterize the class of p.d. matrices which retain positive definiteness when thresholded with respect to a path. We first note that the result below can be derived from first principles by using Lemma \ref{lemma_schur_path} and properties of positive definite matrices. Since a path is also a decomposable graph, we show that Proposition \ref{prop_decomp} can be useful in discovering such characterizations. We illustrate this idea in the next corollary.

\begin{corollary}\label{cor_charac_path}
Let $G$ be a path of length $n \geq 3$ and let $N$ be a $n \times n$ positive definite matrix. Let $M = N_G$ given as in Lemma \ref{lemma_schur_path}. Then the thresholded matrix $M$ is positive definite iff: 
\begin{equation}\label{eqn_path_sigma1}
\sigma_n(k+1) > a_k^2 \alpha_k^{-1} \qquad \forall k=1,\dots,n-2, 
\end{equation}
or equivalently, iff
\begin{equation}\label{eqn_path_sigma2}
\sigma_n(k) > 0 \qquad \forall k=1,\dots,n-2 
\end{equation}
where 
\[
\sigma_n(k) := \alpha_k - \cfrac{a_k^2}{\alpha_{k+1} - \cfrac{a_{k+1}^2}{\alpha_{k+2}-\cfrac{\dots}{\alpha_{n-1}-\cfrac{a_{n-1}^2}{\alpha_n}}}}.
\]
\end{corollary}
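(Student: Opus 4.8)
The plan is to prove that $M>0$ if and only if (\ref{eqn_path_sigma1}) by induction on $n$, applying Proposition \ref{prop_decomp} at each step, and then to deduce the equivalent form (\ref{eqn_path_sigma2}) by an elementary manipulation. The decomposition I would use is $(A,B,C)=(\{1\},\{3,\dots,n\},\{2\})$: the singleton $C=\{2\}$ is complete and separates $A$ from $B$ in the path, so $(A,B,C)$ is a valid decomposition triple for Proposition \ref{prop_decomp}. Two facts coming from $N>0$ will be used throughout. First, $\alpha_k=N_{kk}>0$ for every $k$, since thresholding does not touch the diagonal. Second, each $2\times 2$ block $M_{\{k,k+1\}}$ coincides with the corresponding principal submatrix of $N$ — the edge $(k,k+1)$ lies in the path, so that block is unchanged by thresholding — and is therefore positive definite; in particular $\sigma_n(n)=\alpha_n>0$ and $\sigma_n(n-1)=\alpha_{n-1}-a_{n-1}^2/\alpha_n=\det M_{\{n-1,n\}}/\alpha_n>0$ hold automatically, which is exactly why the corollary only asks for conditions up to $k=n-2$. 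I will also use freely the continued-fraction recursion $\sigma_n(k)=\alpha_k-a_k^2/\sigma_n(k+1)$ with $\sigma_n(n)=\alpha_n$, which is immediate from the definition of $\sigma_n(k)$.

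For the base case $n=3$, Proposition \ref{prop_decomp} says $M>0$ iff $M_{\{1,2\}}>0$, $M_{\{2,3\}}>0$, and $S_1+S_2-M_{22}>0$; the first two hold automatically, and here $S_1=\alpha_2-a_1^2/\alpha_1$ (the Schur complement of $M_{AA}$ in $M_{A\cup C}$) while $S_2=\alpha_2-a_2^2/\alpha_3=\sigma_3(2)$ (the Schur complement of $M_{BB}$ in $M_{B\cup C}$), so the third condition is $\sigma_3(2)>a_1^2/\alpha_1$, exactly (\ref{eqn_path_sigma1}). For the inductive step ($n\ge 4$), the same decomposition gives $M>0$ iff $M_{\{1,2\}}>0$ (automatic), $M_{\{2,\dots,n\}}>0$, and $S_1+S_2-\alpha_2>0$ with $S_1=\alpha_2-a_1^2/\alpha_1$ and $S_2$ the Schur complement of $M_{BB}=M_{\{3,\dots,n\}}$ in $M_{B\cup C}=M_{\{2,\dots,n\}}$. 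The crucial point is that $M_{\{2,\dots,n\}}$ is itself the thresholding of the positive definite matrix $N_{\{2,\dots,n\}}$ by the path on $\{2,\dots,n\}$, so Lemma \ref{lemma_schur_path} identifies $S_2$ with the continued fraction $\sigma_n(2)$. Hence the third condition becomes $\sigma_n(2)>a_1^2/\alpha_1$, i.e.\ (\ref{eqn_path_sigma1}) at $k=1$, while the condition $M_{\{2,\dots,n\}}>0$ is, by the inductive hypothesis applied to the $(n-1)$-vertex path (with every index shifted up by one), equivalent to (\ref{eqn_path_sigma1}) for $k=2,\dots,n-2$. Together these give $M>0$ iff (\ref{eqn_path_sigma1}).

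Finally, to pass to (\ref{eqn_path_sigma2}), I would argue directly from the recursion together with $\alpha_k>0$ and the automatic positivity of $\sigma_n(n-1)$. If (\ref{eqn_path_sigma1}) holds then $\sigma_n(j)>0$ for $j=2,\dots,n-1$ (since $a_k^2/\alpha_k\ge 0$), and then $\sigma_n(1)=\alpha_1-a_1^2/\sigma_n(2)>0$ because $\sigma_n(2)>a_1^2/\alpha_1$ forces $a_1^2/\sigma_n(2)<\alpha_1$; conversely, if (\ref{eqn_path_sigma2}) holds then together with $\sigma_n(n-1)>0$ we get $\sigma_n(k+1)>0$ for every $k\le n-2$, so $0<\sigma_n(k)=\alpha_k-a_k^2/\sigma_n(k+1)$ rearranges — on multiplying through by the positive number $\sigma_n(k+1)/\alpha_k$ — to $\sigma_n(k+1)>a_k^2/\alpha_k$. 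The main obstacle is essentially bookkeeping: recognizing that the Schur complement $S_2$ produced by Proposition \ref{prop_decomp} is precisely the continued fraction $\sigma_n(2)$ of Lemma \ref{lemma_schur_path}, keeping the index shift straight when invoking the inductive hypothesis on the shorter path, and observing that the Schur complements appearing in Proposition \ref{prop_decomp} are well-defined — which they are, because that proposition is only applied together with the conditions $M_{AA}>0$ and $M_{BB}>0$ that guarantee the necessary invertibility.
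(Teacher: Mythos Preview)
Your proof is correct and follows essentially the same approach as the paper: induction on $n$ using the decomposition $(\{1\},\{3,\dots,n\},\{2\})$ in Proposition~\ref{prop_decomp}, identifying $S_2$ with $\sigma_n(2)$ via Lemma~\ref{lemma_schur_path}, and reducing the remaining condition $M_{\{2,\dots,n\}}>0$ to the inductive hypothesis on the shorter path. The only cosmetic differences are that the paper handles the base case $n=3$ by computing $\det M$ directly rather than invoking Proposition~\ref{prop_decomp}, and that your argument for the equivalence of (\ref{eqn_path_sigma1}) and (\ref{eqn_path_sigma2}) is more explicit than the paper's one-line remark.
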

\begin{proof}
We prove the result by induction on $n$. Suppose first that $n=3$. Note that in this case, we only need to check (\ref{eqn_path_sigma1}) for $k=1$. Since $N > 0$, the matrix $M = N_G$ is positive definite if and only if $\det M > 0$. Now 
\begin{equation}\label{eqn_det1}
\det M = \alpha_1(\alpha_2 \alpha_3 - a_2^2) - a_1^2 \alpha_3. 
\end{equation}
On the other hand, equation (\ref{eqn_path_sigma1}) can be expressed as 
\begin{equation}\label{eqn_det2}
 \sigma_3(2) - a_1^2\alpha_1^{-1} =   \alpha_2 - \frac{a_2^2}{\alpha_3}  - a_1^2\alpha_1^{-1} = \frac{1}{\alpha_1 \alpha_3} \det M. 
\end{equation}
Since $N > 0$, the diagonal elements of $M$ are positive and therefore the two conditions (\ref{eqn_det1}) and (\ref{eqn_det2}) are equivalent. This proves the result for $n=3$. Now assume the result is true for every path with $n-1$ vertices and let $G$ be a path with $n$ vertices. Let $A = \{1\}$, $C = \{2\}$ and $B = \{3,\dots,n\}$. Then $(A, C, B)$ is a decomposition of the path $G$ and by Proposition \ref{prop_decomp}, $M > 0$ iff  $M_{A \cup C} > 0$, $M_{B \cup C} > 0$ and $S_1 + S_2 - M_{CC} > 0$ where $S_1$ and $S_2$ are the Schur complements of $M_{AA}$ and $M_{BB}$ in $M_{A \cup C}$ and $M_{B \cup C}$ respectively. We have $S_1 = \alpha_2 - a_1^2 \alpha_1^{-1}$ and by Lemma \ref{lemma_schur_path}, $S_2 = \sigma_n(2)$. Notice that $M_{A \cup C} > 0$ since $M > 0$ and $A \cup C$ is complete as it is not affected by the thresholding. Therefore, $M > 0$ iff $M_{B \cup C} > 0$ and $S_1 + S_2 - M_{CC} = \alpha_2 - a_1^2 \alpha_1^{-1} + \sigma_n(2) - \alpha_2 = \sigma_n(2)-a_1^2 \alpha_1^{-1} > 0$. The latter condition is equivalent to equation (\ref{eqn_path_sigma1}) for $k=1$. Now, $M_{B \cup C}$ is the same matrix as $M$ but for a path of $n-1$ points. Therefore, by the induction hypothesis, the matrix $M_{B \cup C}$ is positive definite iff 
\[
\sigma_n(k+1) - a_k^2 \alpha_{k}^{-1} > 0 \qquad \forall k=2,\dots,n-2. 
\]
As a consequence, the matrix $M = N_G$ is positive definite if and only if 
\[
\sigma_n(k+1) > a_k^2 \alpha_k^{-1} \qquad \forall k=1,\dots,n-2
\]
and the result follows. The preceding condition can be simplified to $\sigma_n(k) > 0$ for $k=1, \dots, n-2$ since 
\[
\sigma_n(k) = \alpha_k - \frac{a_k^2}{\sigma_n(k+1)}. 
\]
This proves the equivalence between conditions (\ref{eqn_path_sigma1}) and (\ref{eqn_path_sigma2}) and concludes the proof. 
\end{proof}

\begin{acknowledgement}
We wish to thank Professor Stanley C. Eisenstat for useful feedback on the paper. 
\end{acknowledgement}

%%%%%%%%%%%%%%%%%%%%%%%%%%%%%%%%%

\bibliography{biblio}

\end{document}